\newcommand{\T}{\mathbb{T}}
\newcommand{\supp}{{\rm supp}\, }
\newcommand{\sgn}{{\rm sgn}}
\newcommand{\R}{\mathbb{R}}
\newcommand{\N}{\mathbb{N}}
\newcommand{\C}{\mathbb{C}}
\newcommand{\RR}{{\mathbb \R}}
\renewcommand{\P}{{\mathbb{P}}}
\newcommand{\E}{{\mathbb{E}}}
\newcommand{\spacing}[1]{\renewcommand{\baselinestretch}{#1}\large\normalsize}
\newcommand{\Id}{ \boldsymbol{Id} }
\newcommand{\nrecA}{\text{BP does not recover } \op \text{ from } \op \boldsymbol{h}^A}
\newcommand{\recR}{ \text{BP recovers } \op \text{ from } \op \boldsymbol{h}^R}
\newcommand{\nrecR}{\text{BP does not recover } \op \text{ from } \op \boldsymbol{h}^R}
\newcommand{\BPsi}{\bm{\Psi}}
\newcommand{\BG}{\bm{\mathcal G} }
\def\op{\boldsymbol \Gamma}
\def\CC{{\mathbb C}}
\newcommand{\defright}{=}
\newcommand{\defleft}{=}
\title{Identification of Matrices having a Sparse Representation}
\author{
        G\"otz E. Pfander \footnotemark[1],
        Holger Rauhut     \footnotemark[2],
        Jared Tanner   \footnotemark[3]
        }
\begin{document}
\maketitle
\date{}
\renewcommand{\thefootnote}{\fnsymbol{footnote}}

\footnotetext[1]{School of Engineering and Science, Jacobs
University Bremen, 28759 Bremen, Germany, g.pfander@jacobs-university.de}

\footnotetext[2]{Numerical Harmonic Analysis Group, Faculty of
Mathematics, University of Vienna, Nordbergstrasse 15, A-1090
Vienna, Austria, holger.rauhut@univie.ac.at. H.R. is supported by
the European Union's Human Potential Programme under contract
MEIF-CT 2006-022811.}

\footnotetext[3]{ Department of Mathematics, University of Utah, 155
South 1400 East, Salt Lake City, UT 84112-0090, USA.  J.T. would like
to thank John E. and Marva M. Warnock for their generous support in the
form of an endowed chair.
tanner@math.utah.edu}
\renewcommand{\thefootnote}{\arabic{footnote}}
\pagenumbering{arabic} \maketitle



\begin{center}
\begin{minipage}[c]{15cm} \it
We consider the problem of recovering a matrix from its action on a
known vector in the setting where the matrix can be represented
efficiently in a known matrix dictionary. Connections with sparse
signal recovery allows for the use of efficient reconstruction
techniques such as Basis Pursuit. Of particular interest is the
dictionary of time-frequency shift matrices and its role for
channel estimation and identification in communications engineering.  We present
recovery results for Basis Pursuit with the time-frequency shift dictionary
and various dictionaries of random matrices.
\end{minipage}
\end{center}

\section{Introduction}

Inferring reliable information from limited data is a key task in the
sciences. For example, identifying a channel operator from its
response to a limited number of test signals is a crucial step in
radar and communications engineering \cite{GR05,HS07,KP06,PDT91,PW06b,Sko80}. Here we consider the canonical setting where an operator is approximated by a linear map, that is, by a matrix $\op\in\CC^{n\times m}$. While it is clear that $\op$ is determined by its action on any $m$
vectors that span $\CC^m$, significantly fewer measurements may be
sufficient if {\it a-priori} information about the operator is at hand. For instance, one commonly considers the question whether a single test signal $\boldsymbol h$, referred to also as identifier, can be used to identify $\op$ from  $\op \boldsymbol{h}$. {\it A priori} information guaranteeing that such an $\boldsymbol h$ exists is generally deduced from physical considerations which may ensure that $\op$ can be
efficiently represented or approximated using relatively few basic
matrices from a known matrix dictionary.

In wireless communications (\cite{Cor01,GP07,Pae01} and references within) and sonar \cite{mi87,st99},
for example, the narrowband regime of a transmission channel
can generally be well approximated by a linear combination of a
small number of time-frequency shift matrices.
Signals travel from the source to the receiver along a number of
different paths, each of which can be modeled by a time shift (delay
dependent on the length of the path traveled) and a frequency shift
(Doppler effect caused by the motion of the transmitter, of the
receiver, and of reflecting objects) \cite{be63,GP07}. It is frequently
assumed, that the number of relevant (but unknown) paths, that is, in slightly simplified terms
the number of involved time-frequency shifts
is relatively small when compared to the symbol length. For example, for mobile communications the number of paths required to well approximate a channel in rural areas or typical urban regiments does not exceed 10  \cite[pages 266,283]{Pae01}, see also \cite{Cav02,Cor01}. 
In wireless communications the benefit of recovering the operator at
the receiver is clear. Knowledge of the operator is necessary to
invert it and to recover the information carrying channel input from
the channel output.

Complexity regularization has recently seen a resurgence of interest
in the signal processing community under the monikers {\it sparse
signal recovery} and {\it sparse
approximation}. In sparse signal recovery, one seeks the solution of an underdetermined
system of equations $\boldsymbol{Ax}=\boldsymbol{b}$, $\boldsymbol{A}\in\CC^{n\times N}$, $n<N$, with $\boldsymbol x$
having the fewest number of non-zero entries from all solutions of
$\boldsymbol{Ax}=\boldsymbol{b}$. We show in Section~\ref{sec:results-context} that the
identification of a matrix from its
action on a single test signal falls into the same setting as
sparse signal recovery when the matrix
is known to have a sparse representation. This observation
allows us to adopt efficient algorithms from sparse signal recovery for
the sparse matrix identification question.   Examples of applications include
the channel identification,  estimation, or sounding problem
described in part above, which also have been considered in the case
of time-invariant channels in \cite{CS05,CR02,HKP05}.
Numerical results based on Basis Pursuit
have been obtained for time-varying channels in \cite{SKPW05}. Further, the application of recovery methods of sparsely represented operators to radar measurements is discussed in \cite{HS07}.

In brief, the content of this paper is organized as follows. In Section~\ref{sec:results-context} we formalize the matrix identification
problem for matrices with sparse representations. We establish a
connection to the recovery problem of vectors with
sparse representations and state the main results that are proven
and discussed in greater detail in Section~\ref{Sec:Random} and
Section~\ref{sec:gabor}. In particular, we consider matrix ensembles
of random Gaussian or Bernoulli matrices as well as partial Fourier
matrices (Section~\ref{sec:subsectionRandom} and
Section~\ref{Sec:Random}).

In Section~\ref{sec:subsectionTimeFrequShifts} and
Section~\ref{sec:gabor} we consider matrix dictionaries of
time-frequency shift matrices which are of
particular interest due to their efficacy in approximating time-varying
transmission channels.  We would like to emphasize that the common framework
of the identification problem for matrices with a sparse representation
and the sparse signal recovery problem implies that the results achieved
on the recovery of matrices with a sparse representation in the dictionary
of time-frequency shift matrices are at the same time results for the
recovery of signals with a sparse representation in Gabor frames.

In Section \ref{sec:severalvectors} we briefly discuss the use
of several test vectors instead of just one, and comment on how this improves
corresponding recovery results.

We conclude with numerical experiments in Section~\ref{sec:numerics}. They verify
our main results concerning sparse representations with time-frequency shift matrices stated in Theorem~\ref{thm:bp_gabor_nlogn}, and show that the
precise recoverability
thresholds follow those proven for Gaussian random matrices in \cite{dota06};
that is, for matrices having a $k$-sparse representation we observe Basis
Pursuit to successfully recover the matrix from its action on a single
vector provided $k\le n/(2\log n)$.

\section{Main results and context}\label{sec:results-context}

Before comparing the matrix identification problem with sparse signal recovery, we
formalize the notion of a matrix having a $k$-sparse representation.

\begin{definition}\label{def:sparse}
A matrix $\op$ has a $k$-sparse representation
in the matrix dictionary $\BPsi\defleft\{\BPsi_j\}_{j=1}^N$ if
\begin{equation*}
\op=\sum_{j}x_j\BPsi_j\quad\quad\mbox{with}\quad
\| \boldsymbol x\|_0=k,
\end{equation*}
and $\|\boldsymbol x\|_0$ counts the number of non-zero entries in $\boldsymbol{x}$, that is $\|\boldsymbol x\|_0=|\supp \boldsymbol{x} |={ \rm cardinality} \{x_j: \, x_j\neq 0\}$.
\end{definition}

The set of elementary matrices comprising $\BPsi$ may form a basis for
$\CC^{n\times m}$ but it may as well only span a subspace of $\CC^{n \times m}$ and/or contain linearly dependent subsets.  In Definition~\ref{def:sparse} we place no restrictions on the
dictionary $\BPsi$.

Identification of matrices having a sparse representation
from their action on a
single vector (henceforth referred to simply as {\em sparse matrix identification}, which is not to be confused with the notion of sparse matrices in numerical analysis) can be formulated as sparse signal recovery problem
through the simple observation that the action of $\op$ on a test signal
$\boldsymbol{h}\in\CC^m$ can be expressed as
\begin{eqnarray}
\op \boldsymbol{h}&{=}&\Big(\sum_{j=1}^N x_j\BPsi_j\Big) \boldsymbol{h}=\sum_{j=1}^N x_j\big(\BPsi_j
\boldsymbol{h}\big) \label{eq:matrix} = \left(\BPsi_1 \boldsymbol{h}\,|\, \BPsi_2 \boldsymbol{h}\,|\, \ldots \,|\, \BPsi_N
\boldsymbol{h}\right)\boldsymbol{x}{\defright}(\BPsi \boldsymbol{h})\boldsymbol{x}
\end{eqnarray}
where $\boldsymbol{x}=(x_1,\; x_2,\; \cdots ,\; x_N)^T$ and
$ (\BPsi \boldsymbol{h})= (\BPsi_1 \boldsymbol{h}\,|\,\BPsi_2 \boldsymbol{h}\,|\,\ldots\,|\,\BPsi_N \boldsymbol{h} ).$

In classical sparse signal recovery the sparsest vector $\boldsymbol{x}$
satisfying $\boldsymbol{Ax}=\boldsymbol{b}$ is sought given $\boldsymbol{b}$ and $\boldsymbol{A}$; to identify the matrix
$\op$, $\op \boldsymbol{h}$ takes the place of $\boldsymbol{b}$ and the $j^{th}$ column of $\boldsymbol{A}$
is $\BPsi_j \boldsymbol{h}$ for $j=1,2,\ldots,N$.

As mentioned above, we note that in case of the $\BPsi_j$ being time-frequency shift matrices, the columns in $ \boldsymbol{A}=(\BPsi \boldsymbol{h})$ form a Gabor system with window $\boldsymbol{h}$ \cite{Chr03,gro01,lapfwa05}. Consequently, all our identifiability results concerning representations with time-frequency shift matrices are also results for the recovery of signals that are sparse in a Gabor system.

\begin{remark}\rm
Although sparse matrix identification
can be cast as sparse signal recovery, two important differences
should be noted.
\begin{itemize}
\item In most applications, sparse signal recovery is only of interest
for $k$-sparse vectors with $k<n$, as the linear dependence of the $N$ columns
of $\boldsymbol{A}\in \CC^{n\times N}$, $n<N$, implies that $n$-term solutions $\boldsymbol{x}$
for $\boldsymbol{Ax}=\boldsymbol{b}$ are never unique.  However, in some cases
an $n$-term solution might be of interest if there is no sparser solution of
$\boldsymbol{Ax}=\boldsymbol{b}$.    In contrast, the goal in sparse matrix
identification is not to represent $\boldsymbol{b}=\op \boldsymbol{h}$
efficiently, but to recover $\op$. The non-uniqueness of $n$-term solutions
to $(\BPsi \boldsymbol{h})\boldsymbol{x}=\op \boldsymbol{h}$ implies that there always exist
infinitely many $n-$sparse matrices
$\op^{\prime}$ consistent with the observations
$\op^{\prime}\boldsymbol{h}=\op \boldsymbol{h}$.
As such, the recovery of an
$n$-sparse $\boldsymbol{x}$ in the sparse matrix identification setting does
not give any information about the matrix to be identified, $\op$.
\item
In sparse signal recovery the columns of $\boldsymbol{A}$ are used to represent
or to approximate $\boldsymbol{b}$, whereas for sparse matrix identification the matrices $\BPsi_j$ are used
to represent or approximate $\op$.  However, unlike sparse signal recovery where the columns of $\boldsymbol{A}$ appear explicitly in the reconstruction, the $\BPsi_j$ do not appear explicitly when
sparse matrix identification is cast as sparse signal recovery (\ref{eq:matrix}); rather,
only the action of $\BPsi_j$ on the test vector $\boldsymbol{h}$ is utilized.
The test vector
$\boldsymbol{h}\in \CC^m$ has no analog in traditional sparse signal recovery,
and can be exploited in sparse matrix identification to design desirable
characteristics in $\BPsi_j \boldsymbol{h}$.  This design freedom is utilized extensively
in our main results concerning the matrix dictionary of time-frequency shifts,
Theorem~\ref{thm:bp_gabor_nlogn}.
\end{itemize}
\end{remark}

Note that the computational difficulty in sparse signal recovery, sparse approximation, and our formulation of sparse matrix identification arises from the fact that the support set of the non-zero entries in $\boldsymbol{x}$
is unknown.
While the direct solution of finding the sparsest representation of
$\op$ in the dictionary $\BPsi$
\begin{equation}\label{eq:l0}
\min \|\boldsymbol{x}'\|_{0} \quad\mbox{subject to}\quad (\BPsi \boldsymbol{h})
\boldsymbol{x}'=\op \boldsymbol{h},
\end{equation}
involves a combinatorial search of the support set
and is therefore computationally intractable, a number of
computationally efficient algorithms have been shown to recover the
sparsest solution if appropriate conditions are met.  We
concentrate here on recoverability conditions for the canonical
sparse signal recovery algorithm Basis Pursuit (BP) where the convex
problem
\begin{equation}\label{eq:l1}
\min \|\boldsymbol{x}'\|_{1} \quad\mbox{subject to}\quad (\BPsi \boldsymbol{h})
\boldsymbol{x}'=\op \boldsymbol{h},
\end{equation}
$\|\boldsymbol{x}\|_1=\sum_j |x_j|$, is solved as a proxy to (\ref{eq:l0}).

The convex program (\ref{eq:l1}) can be solved efficiently using well
established optimization algorithms for second-order cone
programming and linear programming \cite{bova04,dots06-1,bogokikslu07},
for complex and real valued systems, respectively.  We give theoretical and
numerical evidence for conditions where the solution of (\ref{eq:l1})
coincides exactly with that of (\ref{eq:l0}).
Many other algorithms may also be used as proxys for (\ref{eq:l0}),
including Orthogonal Matching Pursuit (OMP)
\cite{gitr05,kura06,tr04}, stagewise
orthogonal matching pursuit (StOMP) \cite{dots06}, and an algorithm
based upon error correcting codes \cite{akta07}--to name a few.
Our principal technical results in Section~\ref{subsec:gabor_coherence}
also give results for
OMP, but for conciseness we do not state them here, leaving them to
the interested reader.

In practice, the measured vector $\op \boldsymbol{h}$ will be contaminated by
noise, and, in addition, the operator $\op$ will not be strictly sparse,
but will instead be well approximated by a sparse representation;
in this case the minimization problem (\ref{eq:l1})
will be replaced by its well known variant 
\begin{equation}\label{eq:l1noise}
\min \|\boldsymbol{x}'\|_{1} \quad\mbox{subject to}\quad \|(\BPsi \boldsymbol{h})\boldsymbol{x}'-\op \boldsymbol{h}\|_2\leq \epsilon,
\end{equation}
where $\|\boldsymbol{z}\|_2=\sqrt{\sum_j |z_j|^2}$ as usual.

\subsection{Dictionaries of random matrices}\label{sec:subsectionRandom}

Many known results in sparse signal recovery, sparse approximations and their companion theory
of compressed sensing involve random matrices
\cite{badadewa06,cata06,do04,dota06,rascva06}. Based on these
results, we obtain recovery results for matrix dictionaries where
all its member matrices are chosen at random. From a practical point
of view such random matrix dictionaries do not seem to be useful in
the sparse matrix identification setting;
nevertheless, the statements give some insight into the sparse matrix
identification question as they give guidance in what kind of results to seek
 in the mathematical analysis of structured and more  application relevant  matrix dictionaries.

\begin{theorem}\label{thm:Random} Let $\boldsymbol{h}$ be a non-zero vector in $\RR^m$.
\begin{itemize}
\item[(a)]
Let all entries of the $N$ matrices $\BPsi_j \in \RR^{n\times m}$,
$j=1,\hdots,N$ be chosen independently according to a standard
normal distribution (Gaussian ensemble); or
\item[(b)]  let all entries of the $N$ matrices
$\BPsi_j \in \RR^{n\times m}$, $j=1,\hdots,N$ be independent
Bernoulli $\pm 1$ variables (Bernoulli ensemble).
\end{itemize}
Then there exists a positive constant $c$ so that for $\varepsilon>0$,
\[
k \leq c\,\frac{n}{\log\big(\frac{N}{n\varepsilon}\big)}
\]
implies that with probability of at least $1-\varepsilon$ all
matrices $\op$ having a $k$-sparse representation with respect to
$\BPsi = \{\BPsi_j\}$ can be recovered from $\op \boldsymbol{h}$ by Basis Pursuit
(\ref{eq:l1}).
\end{theorem}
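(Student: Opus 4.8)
The plan is to exploit the reduction (\ref{eq:matrix}): since $\op\boldsymbol{h}=(\BPsi\boldsymbol{h})\boldsymbol{x}$, recovering the $k$-sparse matrix $\op=\sum_j x_j\BPsi_j$ by Basis Pursuit (\ref{eq:l1}) is equivalent to recovering the $k$-sparse coefficient vector $\boldsymbol{x}$ from $\boldsymbol{A}\boldsymbol{x}$, where $\boldsymbol{A}=(\BPsi\boldsymbol{h})$ is the $n\times N$ matrix whose $j$-th column is $\BPsi_j\boldsymbol{h}$. Hence it suffices to show that $\boldsymbol{A}$ is, with the stated probability, a matrix for which $\ell_1$-minimization recovers every $k$-sparse vector. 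My first step is therefore to identify the distribution of the entries of $\boldsymbol{A}$. Writing $A_{ij}=(\BPsi_j\boldsymbol{h})_i=\sum_{l=1}^m (\BPsi_j)_{il}\,h_l$, I observe that distinct entries $A_{ij}$ involve disjoint collections of the independent entries $(\BPsi_j)_{il}$: different rows $i$ use different rows of $\BPsi_j$, and different columns $j$ use different (independent) matrices $\BPsi_j$. Thus the $A_{ij}$ are mutually independent, each with mean zero and variance $\sum_l h_l^2=\|\boldsymbol{h}\|_2^2>0$ since $\boldsymbol{h}\neq\boldsymbol{0}$.

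In case (a) a sum of independent centered Gaussians is Gaussian, so $A_{ij}\sim N(0,\|\boldsymbol{h}\|_2^2)$ and $\boldsymbol{A}/\|\boldsymbol{h}\|_2$ is a standard i.i.d.\ Gaussian matrix; since the feasible set of (\ref{eq:l1}) is unchanged under the positive rescaling $\boldsymbol{A}\mapsto\boldsymbol{A}/\|\boldsymbol{h}\|_2$ (divide the constraint by $\|\boldsymbol{h}\|_2$), recovery of $\op$ is governed by a genuine Gaussian random matrix. In case (b) each $A_{ij}=\sum_l h_l\varepsilon_{ijl}$ is a weighted Rademacher sum, which by Hoeffding's inequality is subgaussian with variance proxy $\|\boldsymbol{h}\|_2^2$; together with independence this makes $\boldsymbol{A}$ a (scaled) subgaussian random matrix. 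Either way the problem is reduced to the standard compressed sensing statement for random matrices with independent, mean-zero, subgaussian entries of equal variance.

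The remaining step invokes the known recovery theory for such matrices \cite{badadewa06,cata06,rascva06}. Concretely, for the normalized matrix $\boldsymbol{B}=\boldsymbol{A}/(\sqrt{n}\,\|\boldsymbol{h}\|_2)$ the concentration estimate $\P\big(\,\big|\,\|\boldsymbol{B}\boldsymbol{v}\|_2^2-\|\boldsymbol{v}\|_2^2\,\big|\ge t\|\boldsymbol{v}\|_2^2\,\big)\le 2e^{-cnt^2}$ holds for subgaussian entries; combining it with a $\rho$-net of each $2k$-dimensional coordinate subspace and a union bound over the $\binom{N}{2k}$ support sets shows that $\boldsymbol{B}$ satisfies the restricted isometry property of order $2k$ with constant below $\sqrt{2}-1$ outside an event of probability at most $\varepsilon$, provided $k\le c\,n/\log(N/(n\varepsilon))$; that restricted isometry property in turn forces $\ell_1$-minimization to recover every $k$-sparse vector, hence every $k$-sparsely representable $\op$. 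I expect the only genuine subtlety to be case (b): the entries of $\boldsymbol{A}$ are \emph{not} Bernoulli but weighted Rademacher sums, so one cannot cite a ``Bernoulli matrix'' result verbatim; the point is that the concentration/covering proof of the restricted isometry property uses only subgaussianity of the entries, which I established above, so the same argument applies uniformly to both ensembles. Matching the union-bound exponent, of order $2k\log(eN/2k)+\log(2/\varepsilon)$, against $cn$ and absorbing the lower-order $\log\log$ terms reproduces the stated threshold $k\le c\,n/\log(N/(n\varepsilon))$.
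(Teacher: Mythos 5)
Your proposal is correct and follows essentially the same route as the paper: reduce to sparse vector recovery for $\boldsymbol{A}=(\BPsi\boldsymbol{h})$, observe that its entries are jointly independent with variance $\|\boldsymbol{h}\|_2^2$ (exactly Gaussian in case (a) by rotational invariance, weighted Rademacher sums in case (b)), establish the restricted isometry property via a concentration inequality plus covering/union bound, and conclude with the Cand\`es--Romberg--Tao recovery theorem. The only cosmetic difference is in case (b), where the paper derives the concentration estimate from Khintchine's moment comparison with the Gaussian (following Achlioptas) and then cites \cite{rascva06} for the net/union-bound step, whereas you invoke Hoeffding subgaussianity directly --- both are standard and lead to the same threshold.
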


Using Theorem~\ref{thm_bp_stab}, this recovery result can be made
stable under perturbation of $\op \boldsymbol{h}$ by noise, and also applies
when $\op$ is not exactly $k$-sparse,
but can be well approximated by a $k$-sparse operator.

Precise information on the constant $c$ will be given in
Section~\ref{Sec:Random}. In case of the Gaussian ensemble Donoho and Tanner
\cite{dota05,do05,do06-1,dota05-1,dota06} obtained sharp
thresholds separating regions in the ($k/n$, $n/N$) plane
where recovery holds or fails with high probability;
Section~\ref{subsec:gaussian} recounts these and additional results on
Gaussian systems.  Theorem~\ref{thm:Random}(b) is proven in
Section~\ref{subsec:bernoulli}, and similar results for certain
diagonal
matrices are proven in Section~\ref{subsec:fourier}.

\subsection{The dictionary of time-frequency shift
matrices}\label{sec:subsectionTimeFrequShifts}

As outlined in the introduction, the matrix dictionary of
time-frequency shifts  appears naturally in the channel
identification problem in wireless communications \cite{be63} or
sonar \cite{st99}. Due to physical
considerations wireless channels may indeed be modeled by sparse
linear combinations of time-frequency shifts
$\boldsymbol{M}_{\ell} \boldsymbol{T}_p$, where
the periodic translation operators $\boldsymbol{T}_p$ and
modulation operator $\boldsymbol{M}_\ell$ on $\CC^n$ are given by
\begin{equation}\label{eq:trans_mod}
(\boldsymbol{T}_p \boldsymbol{h})_q =h_{(p+q)\!\!\!\!\!\mod{n}},\;\; (\boldsymbol{M}_{\ell}\boldsymbol{h})_q \defleft e^{2\pi i\ell q/n} h_q.
\end{equation}
The system of time-frequency shifts,
\begin{equation}\label{eq:bg}
\BG = \{\boldsymbol{M}_\ell \boldsymbol{T}_p: \ell,p =0,\hdots, n{-}1\},
\end{equation}
forms a basis of $\CC^{n\times n}$ and for any non-zero $\boldsymbol{h}$,
the vector dictionary $\BG \boldsymbol{h}$ is a Gabor system
\cite{gro01,krpfra07,lapfwa05}. Below, we focus on the so-called
Alltop window $\boldsymbol{h}^A$ \cite{al80,hest03} with entries
\begin{equation}\label{eq:h1}
h^A_q \,\defleft\, \frac{1}{\sqrt{n}}
e^{2\pi i q^3/n}, \quad q=0,\hdots,n{-}1,
\end{equation}
and the randomly generated window $\boldsymbol{h}^R$ with entries
\begin{equation}\label{eq:h2}
h^R_q \,\defleft\, \frac{1}{\sqrt{n}} \epsilon_q, \quad q=0,\hdots,n{-}1,
\end{equation}
where the $\epsilon_q$ are independent and uniformly distributed
on the torus $\{z \in \CC, |z|=1\}$.

Invoking existing recovery results \cite{doelte06,grva06,tr04,tr06-3}
(see Theorems~\ref{thm:coh} and \ref{thm:stable:coh} below)
and our results on the coherence of Gabor systems
$\BG \boldsymbol{h}^A$ and  $\BG \boldsymbol{h}^R$ in Section~\ref{subsec:gabor_coherence}, see
Section~\ref{cor1}, we will obtain

\begin{theorem}\label{cor1}
\begin{itemize}
\item[(a)] Let $n$ be prime and $\boldsymbol{h}^A$ be the Alltop window
defined in (\ref{eq:h1}). If $k < \frac{\sqrt{n}+1}{2}$ then Basis Pursuit
recovers from $\op \boldsymbol{h}^A$ all matrices $\op\in\CC^{n\times n}$ having a $k$-sparse
representation, $\op = \sum_{(p,\ell) \in \Lambda} x_{p\ell} \boldsymbol{M}_\ell \boldsymbol{T}_p$, $|\Lambda|=k$, with respect to the time-frequency shift dictionary $\BG$ given in (\ref{eq:bg}).
\item[(b)] Let $n$ be even and choose $\boldsymbol{h}^R$ to be the random unimodular
window in (\ref{eq:h2}). Let $t > 0$ and suppose
\begin{equation}\label{cond:k}
k \leq \frac{1}{4} \sqrt{\frac{n}{2 \log n + \log 4 + t}} + \frac{1}{2}\,.
\end{equation}
 Then with probability of at least
$1-e^{-t}$ Basis Pursuit recovers from $\op \boldsymbol{h}^R$ all matrices $\op \in
\CC^{n \times n}$ having a $k$-sparse representation with respect to the
time-frequency shift dictionary $\BG$ given in (\ref{eq:bg}).
\end{itemize}
\end{theorem}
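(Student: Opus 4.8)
The plan is to use the reduction to sparse signal recovery established in \eqref{eq:matrix}. Recovering a matrix $\op$ with a $k$-sparse representation in $\BG$ from $\op\boldsymbol{h}$ is exactly the problem of recovering the $k$-sparse coefficient vector $\boldsymbol{x}$ from $(\BG\boldsymbol{h})\boldsymbol{x}$, whose columns $\boldsymbol{M}_\ell\boldsymbol{T}_p\boldsymbol{h}$ form the Gabor system with window $\boldsymbol{h}$. Thus any sufficient condition for Basis Pursuit to recover every $k$-sparse vector from the measurement matrix $\boldsymbol{A}=\BG\boldsymbol{h}$ proves the theorem. The cleanest such condition is the coherence bound (Theorem~\ref{thm:coh}): Basis Pursuit recovers every $k$-sparse $\boldsymbol{x}$ whenever $k<\tfrac12(1+\mu^{-1})$, where $\mu$ is the coherence of the normalized columns of $\boldsymbol{A}$. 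Both parts then reduce to estimating the coherence of $\BG\boldsymbol{h}^A$ and $\BG\boldsymbol{h}^R$, respectively.

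The first step common to both parts is to express the coherence through the discrete ambiguity function. Since each $\boldsymbol{M}_\ell\boldsymbol{T}_p$ is unitary and both windows are unit norm, all columns have norm one, and the commutation relation for time-frequency shifts shows that $\boldsymbol{M}_\ell\boldsymbol{T}_p(\boldsymbol{M}_{\ell'}\boldsymbol{T}_{p'})^\ast$ is again a time-frequency shift up to a unimodular phase. Hence each inner product $\langle \boldsymbol{M}_\ell\boldsymbol{T}_p\boldsymbol{h},\boldsymbol{M}_{\ell'}\boldsymbol{T}_{p'}\boldsymbol{h}\rangle$ equals, up to modulus one, a value $A_{\boldsymbol{h}}(a,b)=\langle \boldsymbol{M}_b\boldsymbol{T}_a\boldsymbol{h},\boldsymbol{h}\rangle$ with $(a,b)=(p-p',\ell-\ell')$, so that $\mu=\max_{(a,b)\neq(0,0)}|A_{\boldsymbol{h}}(a,b)|$. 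This turns each coherence computation into a single maximum over the $n^2-1$ nonzero time-frequency shifts.

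For part (a) I would substitute the cubic phase \eqref{eq:h1} into $A_{\boldsymbol{h}^A}(a,b)=\tfrac1n\sum_q e^{2\pi i((q+a)^3-q^3)/n}\,e^{-2\pi i b q/n}$. For $a\neq0$ the exponent is a nonzero quadratic polynomial in $q$, and the resulting Gauss sum has modulus exactly $\sqrt{n}$ when $n$ is prime, giving $|A_{\boldsymbol{h}^A}(a,b)|=1/\sqrt{n}$; for $a=0$, $b\neq0$ the exponent is linear and the full geometric sum vanishes. Therefore $\mu=1/\sqrt{n}$, and $k<\tfrac12(1+\mu^{-1})$ becomes $k<\tfrac{\sqrt{n}+1}{2}$, which is precisely part (a).

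For part (b) the coherence is random: writing $A_{\boldsymbol{h}^R}(a,b)=\tfrac1n\sum_q \epsilon_{q+a}\overline{\epsilon_q}\,e^{-2\pi i b q/n}$, one checks that for $a=0,b\neq0$ it vanishes identically, while for $a\neq0$ it is a zero-mean sum of unimodular terms. The plan is to bound $|A_{\boldsymbol{h}^R}(a,b)|$ for each fixed nonzero $(a,b)$ by a complex concentration inequality of Hoeffding/Azuma type, and then take a union bound over the $n^2-1$ shifts, calibrating the deviation so that the $\log(n^2)=2\log n$ from the union and a $\log 4$ from splitting real and imaginary parts are absorbed; this yields $\mu\le 2\sqrt{(2\log n+\log 4+t)/n}$ with probability at least $1-e^{-t}$, and substitution into $k<\tfrac12(1+\mu^{-1})$ gives exactly \eqref{cond:k}. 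The main obstacle is this probabilistic coherence estimate: the summands $\epsilon_{q+a}\overline{\epsilon_q}$ are not independent, since each $\epsilon_q$ appears in two of them, so the concentration argument must be organized along the orbits of the shift $q\mapsto q+a$ (pairing terms, which is where $n$ \emph{even} is convenient, e.g.\ for the self-dual shift $a=n/2$), and the union-bound constants must be tracked precisely to land on the stated threshold rather than merely its order of magnitude.
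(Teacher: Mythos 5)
Your proposal is correct and follows essentially the same route as the paper: part (a) is Theorem~\ref{thm:coh} combined with the coherence $\mu=1/\sqrt{n}$ of the Alltop Gabor system (the paper cites Strohmer--Heath for the Gauss-sum computation you sketch), and part (b) is Theorem~\ref{thm:coh} combined with the probabilistic coherence bound of Theorem~\ref{lem:mu_prob}, which the paper proves exactly as you outline---by partitioning the dependent products $\epsilon_{q-p}\overline{\epsilon_q}$ along the orbits of $q\mapsto q+p$ into two independent halves, applying the complex Bernstein inequality to each, and taking a union bound over the $n(n-1)$ nonzero shifts, with the resulting threshold matching \eqref{cond:k} after substitution into $(2k-1)\mu<1$.
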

A slight variation of part (b) also holds for $n$ odd, but is omitted for conciseness. Further note that Theorem~\ref{cor1} also holds with Basis Pursuit literally being replaced by Orthogonal Matching Pursuit \cite{tr04}. Moreover, Theorem~\ref{thm:stable:coh} shows that recovery is stable under perturbation of $\op \boldsymbol{h}^A$ and $\op \boldsymbol{h}^R$ by noise.

In contrast with Theorem~\ref{thm:Random} for random matrices, where $k$ is allowed
to be of order ${\cal O}(n / \log n)$,
Theorem~\ref{cor1} requires $k$ to be of order  $\sqrt{n}$ or
$\sqrt{n/\log n}$.  Substantially larger order  thresholds,  ${\cal O}(n / \log n)$  for  $\boldsymbol{h}^A$ and ${\cal O}(n / \log^{2}(n))$ for $\boldsymbol{h}^R$,  are also possible  to identify a matrix $\op$ which is the linear combination of a small number of time-frequency shift matrices. However, this larger regime of successful recovery necessitates passing from a worst case analysis for sparse $\op$ to an average
case analysis in the sense that the coefficient vector $\boldsymbol{x}$ is chosen
at random.  Theorem~\ref{thm:bp_gabor_nlogn} will follow from recent
work by Tropp, \cite{tr06-2}, and our coherence results in
Section~\ref{subsec:gabor_coherence}, see
Section~\ref{subsec:thm5.2}.

\begin{theorem}\label{thm:bp_gabor_nlogn}
Let $k \geq 3$ and let $\Lambda$ be chosen uniformly at random
among all subsets of $\{0,\hdots,n{-}1 \}^2$ of cardinality $k$. Suppose
further that $\boldsymbol{x} \in \C^n$ has support $\Lambda$ with random phases
$(\sgn(x_{\ell p}))_{(\ell,p)\in \Lambda}$ that are independent and
uniformly distributed on the torus $\{z, |z|=1\}$. Let
\[ \op =
\sum_{(\ell,p) \in \Lambda} x_{\ell p} \boldsymbol{M}_\ell \boldsymbol{T}_p.
\]
\begin{itemize}
\item[(a)]
Let $n$ be prime and choose the Alltop window $\boldsymbol{h}^A$ from
(\ref{eq:h1}). Assume that for $\epsilon > 0$
\begin{equation}\label{kcond:a:1}
k \leq \frac{n}{8 \log(2n^2/\epsilon)}
\end{equation}
and
\begin{eqnarray}
s&:=& \frac{1}{144}\left(e^{-1/4}/2 - \frac{2k}{n}\right)^2 \frac{n}{k \log(k/2+1)}\geq 1
\label{cond:s1}
\end{eqnarray}
Then with probability at least
\[
1 - (\epsilon + (k/2)^{-s})
\]
Basis Pursuit (\ref{eq:l1}) recovers $\op$ from $\op \boldsymbol{h}^A$.
\item[(b)]
Let $n$ be an even number and choose the random window $\boldsymbol{h}^R$ from
(\ref{eq:h2}).
Assume
\begin{equation*}
k \leq \frac{n}{32(\sigma + 2) \log(n) \log(2n^2/\epsilon)}
\end{equation*}
for some $\sigma > 0$ and
\begin{eqnarray*}
 s &:=&   \frac{1}{576(\sigma + 2)}\left(e^{-1/4}/2 - \frac{2k}{n}\right)^2\cdot \ \frac{n}{k \log(k/2+1)} \geq 1 
\end{eqnarray*}
Then with probability at least
\[
1 - (\epsilon + 4n^{-\sigma}  + (k/2)^{-s})
\]
Basis Pursuit (\ref{eq:l1}) recovers $\op$ from $\op \boldsymbol{h}^R$. (A
similar result also holds for $n$ odd.)
\end{itemize}
\end{theorem}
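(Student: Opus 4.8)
The plan is to recast sparse matrix identification as the sparse signal recovery problem (\ref{eq:matrix}) with measurement matrix $\boldsymbol A = \BG\boldsymbol h$, whose columns are the Gabor atoms $\boldsymbol M_\ell\boldsymbol T_p\boldsymbol h$, and then to invoke Tropp's average-case analysis of Basis Pursuit \cite{tr06-2}. That analysis applies to a dictionary of unit-norm atoms and, for a support $\Lambda$ drawn uniformly among the $k$-subsets together with independent uniform phases, guarantees that (\ref{eq:l1}) recovers the coefficient vector $\boldsymbol x$ provided two structural quantities are controlled: the coherence $\mu = \max_{(\ell,p)\neq(\ell',p')}|\langle \boldsymbol M_\ell\boldsymbol T_p\boldsymbol h,\, \boldsymbol M_{\ell'}\boldsymbol T_{p'}\boldsymbol h\rangle|$ and the operator norm $\|\BG\boldsymbol h\|_{2\to2}$. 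Since each $\boldsymbol M_\ell\boldsymbol T_p$ is unitary and $\|\boldsymbol h^A\|_2 = \|\boldsymbol h^R\|_2 = 1$, the atoms are automatically unit-norm, and because $\BG$ is a basis of $\CC^{n\times n}$, recovering $\boldsymbol x$ is the same as recovering $\op$. Thus the whole argument reduces to feeding the two parameters $\mu$ and $\|\BG\boldsymbol h\|_{2\to 2}$, obtained in Section~\ref{subsec:gabor_coherence}, into the cited theorem.

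I would first record the operator norm, which is common to both windows. Because the time-frequency shifts form a unitary operator basis, one has $\sum_{\ell,p}\boldsymbol M_\ell\boldsymbol T_p\, \boldsymbol X\, (\boldsymbol M_\ell\boldsymbol T_p)^* = n\,\mathrm{tr}(\boldsymbol X)\,\Id$; applying this to $\boldsymbol X = \boldsymbol h\boldsymbol h^*$ shows that $\BG\boldsymbol h$ is a tight frame with frame operator $n\,\Id$, hence $\|\BG\boldsymbol h\|_{2\to2}^2 = n$ for any unit-norm $\boldsymbol h$. I would then state Tropp's theorem in the form actually used, which splits the failure event into a \emph{conditioning} part, controlling $\|(\BG\boldsymbol h)_\Lambda^*(\BG\boldsymbol h)_\Lambda - \Id\|$ by a matrix concentration bound that fails with probability at most $(k/2)^{-s}$, the exponent $s$ being governed by $\mu^2 k$ and by $k\|\BG\boldsymbol h\|^2/N$ (this produces the quantity $s$, with the constants $\frac1{144}$ and $e^{-1/4}/2$), and a \emph{correlation} part, bounding $\max_{(\ell,p)\notin\Lambda}|\langle \boldsymbol M_\ell\boldsymbol T_p\boldsymbol h,\boldsymbol w\rangle|<1$ for the random-sign dual certificate $\boldsymbol w$, which fails with probability at most $\epsilon$ under the stated sparsity bound on $k$.

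For part (a) both inputs are deterministic: with $n$ prime the Alltop window (\ref{eq:h1}) yields the extremal coherence $\mu = 1/\sqrt n$, and $\|\BG\boldsymbol h^A\|^2 = n$ as above. Substituting $N = n^2$, $\|\BG\boldsymbol h^A\|^2 = n$ and $\mu^2 = 1/n$ into the two hypotheses turns the correlation condition into $\tfrac kn\log(2n^2/\epsilon)\le \tfrac18$, i.e.\ (\ref{kcond:a:1}), and the conditioning exponent into exactly (\ref{cond:s1}); Tropp's theorem then recovers $\op$ from $\op\boldsymbol h^A$ with probability at least $1-(\epsilon+(k/2)^{-s})$.

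For part (b) only the coherence changes. The operator norm is still $\sqrt n$, but $\mu$ is now random through $\boldsymbol h^R$; the coherence estimate of Section~\ref{subsec:gabor_coherence} shows that, with probability at least $1-4n^{-\sigma}$ over the random unimodular window, $\mu^2 \lesssim (\sigma+2)\,\log(n)/n$. Conditioning on this event and substituting the larger coherence into the same two hypotheses costs one extra factor of $\log n$, precisely what degrades the admissible sparsity from order $n/\log n$ to order $n/\log^2 n$ and fixes the constants displayed in (b); the independent failure probability $4n^{-\sigma}$ then adds, by a union bound over the two independent sources of randomness (the window on one side, the support and phases on the other), to the $\epsilon + (k/2)^{-s}$ of Tropp's theorem, giving $1-(\epsilon+4n^{-\sigma}+(k/2)^{-s})$. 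The main obstacle is the probabilistic coherence bound for $\boldsymbol h^R$: one must show that the largest inner product $\langle \boldsymbol M_\ell\boldsymbol T_p\boldsymbol h^R, \boldsymbol M_{\ell'}\boldsymbol T_{p'}\boldsymbol h^R\rangle$ over all $n^2(n^2-1)$ pairs concentrates at scale $\sqrt{\log n/n}$, which requires a tail bound for each randomized inner product sharp enough to survive the union bound and to track the constants feeding $s$; everything else is bookkeeping inside the cited theorem.
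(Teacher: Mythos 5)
Your proposal is correct and follows essentially the same route as the paper: compute $\|\BG\boldsymbol{h}\|_{2,2}^2=n$ and the coherence ($\mu=1/\sqrt{n}$ for $\boldsymbol{h}^A$ via Strohmer--Heath, and $\mu^2\leq 4(\sigma+2)\log(n)/n$ with probability $1-4n^{-\sigma}$ for $\boldsymbol{h}^R$ via Theorem~\ref{lem:mu_prob}), feed these into Tropp's conditioning bound (Theorem~\ref{thm:Tropp}) to get the exponent $s$, and into his random-phase recovery result (Theorem~\ref{thm:Tropp2}) to turn $2n^2e^{-n/(8\mu^2 k)}\leq\epsilon$ into the stated sparsity conditions, finishing with the same union/conditioning bound over the independent sources of randomness. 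The only cosmetic difference is that you derive $\|\BG\boldsymbol{h}\|_{2,2}^2=n$ from the tight-frame identity for the unitary operator basis, whereas the paper observes that the columns of $\BG\boldsymbol{h}$ form $n$ orthonormal bases; these are the same fact.
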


In simple terms, Theorem~\ref{thm:bp_gabor_nlogn} states that $\op$ can be
recovered from $\op \boldsymbol{h}^A$ or $\op \boldsymbol{h}^R$ with {\em high probability} $1-\varepsilon$
provided that the sparsity of $\op$ satisfies
$k \leq C_\varepsilon n /\log n$ in case of $\boldsymbol{h}^A$ and
$k \leq C'_\varepsilon n/\log(n)^2$ in case of $\boldsymbol{h}^R$.

In Section~\ref{subsec:cor2.6} we use a simple argument from time-frequency analysis to obtain
\begin{corollary}\label{cor:Fourier} Theorems~\ref{cor1},~\ref{thm:bp_gabor_nlogn},
and~\ref{lem:mu_prob}, also hold with the
windows $\boldsymbol{h}^A$ and $\boldsymbol{h}^R$ replaced by their Fourier transforms $\widehat{\boldsymbol{h}^A}$ and
$\widehat{\boldsymbol{h}^R}$, with entries defined as $\widehat{h}_j \,=\, \frac{1}{\sqrt{n}}
\sum_{j=0}^{n{-}1}h_q e^{2\pi i jq/n}$.
\end{corollary}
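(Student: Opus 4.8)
The plan is to exploit the elementary fact from time–frequency analysis that, up to a unimodular phase, conjugation by the (unitary) discrete Fourier transform $\mathcal{F}$, $(\mathcal{F}\boldsymbol{h})_j = \frac{1}{\sqrt n}\sum_q h_q e^{2\pi i jq/n}$, permutes the time–frequency shift dictionary $\BG$ onto itself. Consequently, replacing $\boldsymbol{h}$ by $\widehat{\boldsymbol{h}}=\mathcal{F}\boldsymbol{h}$ leaves unchanged every quantity on which the cited recovery results depend, chiefly the coherence of the Gabor system $\BG\boldsymbol{h}$. First I would record the intertwining identities that follow directly from (\ref{eq:trans_mod}) and the definition of $\mathcal{F}$, namely $\mathcal{F}\boldsymbol{T}_p=\boldsymbol{M}_{-p}\mathcal{F}$ and $\mathcal{F}\boldsymbol{M}_\ell=\boldsymbol{T}_\ell\mathcal{F}$. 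Together with the commutation rule $\boldsymbol{T}_a\boldsymbol{M}_b=e^{2\pi i ab/n}\boldsymbol{M}_b\boldsymbol{T}_a$ (itself immediate from (\ref{eq:trans_mod})) these give
\begin{equation*}
\mathcal{F}\,\boldsymbol{M}_\ell\boldsymbol{T}_p\,\mathcal{F}^{-1}=e^{-2\pi i p\ell/n}\,\boldsymbol{M}_{-p}\boldsymbol{T}_\ell ,
\end{equation*}
so that conjugation by $\mathcal{F}$ sends each $\boldsymbol{M}_\ell\boldsymbol{T}_p$ to a unimodular multiple of the time–frequency shift indexed by $(\ell,p)\mapsto(-p,\ell)$, which is a bijection of $\{0,\dots,n{-}1\}^2$ onto itself.

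Next I would translate this into a statement about inner products. Since $\mathcal{F}$ is unitary and $\|\widehat{\boldsymbol{h}}\|_2=\|\boldsymbol{h}\|_2$ by Parseval, for any two index pairs the magnitudes
\begin{equation*}
\big|\langle \boldsymbol{M}_\ell\boldsymbol{T}_p\widehat{\boldsymbol{h}},\,\boldsymbol{M}_{\ell'}\boldsymbol{T}_{p'}\widehat{\boldsymbol{h}}\rangle\big|
=\big|\langle \boldsymbol{M}_{-p}\boldsymbol{T}_\ell\boldsymbol{h},\,\boldsymbol{M}_{-p'}\boldsymbol{T}_{\ell'}\boldsymbol{h}\rangle\big|
\end{equation*}
coincide, the phases cancelling under the absolute value and $\mathcal{F}$ dropping out by unitarity. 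Ranging over distinct pairs, the left-hand magnitudes thus agree, as a multiset, with the inner-product magnitudes of $\BG\boldsymbol{h}$; equivalently, the Gram matrix of $\BG\widehat{\boldsymbol{h}}$ is obtained from that of $\BG\boldsymbol{h}$ by a permutation of the index set composed with conjugation by a diagonal unimodular matrix. In particular the coherence satisfies $\mu(\BG\widehat{\boldsymbol{h}})=\mu(\BG\boldsymbol{h})$, and more generally all spectral invariants of the off-diagonal Gram matrix are preserved.

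It then remains to observe that each cited result depends on the Gabor system only through such invariants. Theorem~\ref{cor1} is deduced from the coherence-based criteria Theorems~\ref{thm:coh} and \ref{thm:stable:coh} applied to $\mu(\BG\boldsymbol{h})$, so the identical value $\mu(\BG\widehat{\boldsymbol{h}})$ yields the identical conclusions for $\widehat{\boldsymbol{h}}$; the hypotheses on $n$ (prime in part (a), even in part (b)) are untouched. For Theorem~\ref{thm:bp_gabor_nlogn} the average-case analysis depends on the dictionary only through its coherence-type parameters, while the random-support and random-phase model is itself preserved because the relabeling $(\ell,p)\mapsto(-p,\ell)$ is measure preserving on $\{0,\dots,n{-}1\}^2$; hence the same probability estimate holds verbatim. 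The probabilistic coherence bound of Theorem~\ref{lem:mu_prob} transfers in the same way.

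The one point requiring care—and the main obstacle—is the random window $\boldsymbol{h}^R$. Here one cannot argue by saying that $\widehat{\boldsymbol{h}^R}$ has the same distribution as $\boldsymbol{h}^R$: the entries of $\widehat{\boldsymbol{h}^R}$ are normalized sums $\frac1n\sum_q\epsilon_q e^{2\pi i jq/n}$ of the independent unimodular variables $\epsilon_q$, hence neither unimodular nor independent, so $\widehat{\boldsymbol{h}^R}$ is \emph{not} equidistributed with $\boldsymbol{h}^R$. Instead I would argue pointwise in the underlying randomness: for \emph{every} realization of $(\epsilon_q)$ the deterministic identity $\mu(\BG\widehat{\boldsymbol{h}^R})=\mu(\BG\boldsymbol{h}^R)$ holds, so the two coherences are equal as random variables. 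Consequently the probabilistic coherence bound, and with it parts (b) of Theorems~\ref{cor1} and \ref{thm:bp_gabor_nlogn}, transfer without any change to the probability.
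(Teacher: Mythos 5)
Your argument is correct and follows essentially the same route as the paper: the covariance identity $\mathcal{F}\boldsymbol{M}_\ell\boldsymbol{T}_p\mathcal{F}^{-1}=\sigma\,\boldsymbol{M}_{n-p}\boldsymbol{T}_\ell$ with $|\sigma|=1$ together with Plancherel shows that the coherence (and, as you note, the relevant Gram/operator-norm invariants) of $\BG\widehat{\boldsymbol{h}}$ equals that of $\BG\boldsymbol{h}$, and all cited results depend on the dictionary only through these quantities. Your additional remark that for $\boldsymbol{h}^R$ one must argue pointwise in the randomness rather than via equidistribution of $\widehat{\boldsymbol{h}^R}$ is a worthwhile clarification of a point the paper leaves implicit.
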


\section{Tools in sparse signal recovery}\label{sec:sqrtn}

It was shown in (\ref{eq:matrix}) that for any test signal $\boldsymbol{h}$, we
have $\op \boldsymbol{h}=(\BPsi \boldsymbol{h}) \boldsymbol{x} $ where $\boldsymbol{x}$ is the sparse coefficient vector
of $\op$. This observation links the sparse matrix identification question
with sparse signal recovery where one seeks the sparsest solution (\ref{eq:l0})
to the underdetermined system $\boldsymbol{Ax}=\boldsymbol{b}$; in the sparse matrix identification
setting $(\BPsi \boldsymbol{h}) \,=\, (\BPsi_1 \boldsymbol{h}\,|\, \BPsi_1 \boldsymbol{h}\,|\,\ldots \,|\,\BPsi_N \boldsymbol{h})$
takes the place of $\boldsymbol{A}$ and $\op \boldsymbol{h}$ the place of $\boldsymbol{b}$.
In contrast to sparse approximation, where the dictionary $\boldsymbol{A}$
is usually fixed, for sparse matrix identification we have the additional
freedom of designing the test signal $\boldsymbol{h}$ in order for $(\BPsi \boldsymbol{h})$ to have desirable properties.

Let us shortly recall known results in sparse signal recovery and sparse approximation that we
apply to the sparse matrix identification question.  In Section~\ref{subsec:coherence}
we review the notion
of coherence (\ref{eq:coherence}) and its implications for sparse
signal recovery and approximation using Basis Pursuit, (\ref{eq:l1}) and (\ref{eq:l1noise}),
as well as Orthogonal Matching Pursuit.  In Section~\ref{subsec:rip}
we review the restricted isometry property, allowing for
improved recoverability
results for Basis Pursuit. 

\subsection{Coherence}\label{subsec:coherence}

The recoverability properties of sparse signal recovery algorithms
for an underdetermined system $\boldsymbol{Ax}=\boldsymbol{b}$ is
often measured by the coherence of $\boldsymbol{A}$,
\begin{equation}\label{eq:coherence}
\mu \,\defleft\, \max_{r \neq s} |\langle \boldsymbol{a}_r,\boldsymbol{a}_s\rangle|,
\end{equation}
where $\boldsymbol{a}_r$ is the $r^{th}$ column of $\boldsymbol{A}$ and $\|\boldsymbol{a}_r\|_2=1$ for all $r$.

\begin{theorem}[Tropp \cite{tr04}; Donoho, Elad \cite{doel02}]\label{thm:coh}
Let $\boldsymbol{A}$ be a unit norm dictionary with coherence $\mu$.
If
\begin{equation*}
(2k-1) \mu < 1
\end{equation*}
then Basis Pursuit (as well as Orthogonal Matching Pursuit) recovers all $k$-sparse vectors $\boldsymbol{x}$ from
$\boldsymbol{b}=\boldsymbol{Ax}$.
\end{theorem}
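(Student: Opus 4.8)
The plan is to reduce the statement to a condition on the support of the unknown vector, and then to verify that condition using only the coherence bound. Fix a $k$-sparse $\boldsymbol{x}$ with support $T = \supp \boldsymbol{x}$, $|T|\le k$, and let $\boldsymbol{A}_T$ denote the submatrix of $\boldsymbol{A}$ whose columns are indexed by $T$. The key quantity is the Exact Recovery Condition
\[
\mathrm{ERC}(T)\;:=\;\max_{j\notin T}\;\big\|(\boldsymbol{A}_T^*\boldsymbol{A}_T)^{-1}\boldsymbol{A}_T^*\,\boldsymbol{a}_j\big\|_1\;<\;1 .
\]
I would first record the standard fact (Tropp) that $\mathrm{ERC}(T)<1$ forces both Basis Pursuit and Orthogonal Matching Pursuit to recover every $\boldsymbol{x}$ supported on $T$. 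For Basis Pursuit one exhibits the dual certificate $\boldsymbol{v}=\boldsymbol{A}_T(\boldsymbol{A}_T^*\boldsymbol{A}_T)^{-1}\sgn(\boldsymbol{x}_T)$, which satisfies $\boldsymbol{A}_T^*\boldsymbol{v}=\sgn(\boldsymbol{x}_T)$ exactly while, for every $j\notin T$, $|\boldsymbol{a}_j^*\boldsymbol{v}|\le \|(\boldsymbol{A}_T^*\boldsymbol{A}_T)^{-1}\boldsymbol{A}_T^*\boldsymbol{a}_j\|_1<1$; for OMP one argues inductively that at each step the current residual correlates more strongly with some atom indexed in $T$ than with any atom outside $T$, so that only correct atoms are ever selected.

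The technical heart is then to show that $(2k-1)\mu<1$ implies $\mathrm{ERC}(T)<1$ for every $T$ with $|T|\le k$. Here I would decompose the Gram matrix as $\boldsymbol{A}_T^*\boldsymbol{A}_T=\boldsymbol{I}+\boldsymbol{E}$, where $\boldsymbol{E}$ has vanishing diagonal (the columns are unit norm) and off-diagonal entries bounded in modulus by $\mu$. Measuring in the $\ell^1$-operator norm $\|\cdot\|_{1\to1}$ (maximal absolute column sum), each column of $\boldsymbol{E}$ contributes at most $(k-1)\mu$, so $\|\boldsymbol{E}\|_{1\to1}\le(k-1)\mu$. Provided $(k-1)\mu<1$, a Neumann series then bounds the inverse by $\|(\boldsymbol{A}_T^*\boldsymbol{A}_T)^{-1}\|_{1\to1}\le 1/(1-(k-1)\mu)$. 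Since $\boldsymbol{A}_T^*\boldsymbol{a}_j$ has $|T|\le k$ entries each of modulus at most $\mu$, we have $\|\boldsymbol{A}_T^*\boldsymbol{a}_j\|_1\le k\mu$, whence
\[
\mathrm{ERC}(T)\;\le\;\frac{k\mu}{1-(k-1)\mu}.
\]

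Finally I would observe that the right-hand side is strictly below $1$ exactly when $k\mu<1-(k-1)\mu$, i.e. when $(2k-1)\mu<1$, which is the hypothesis; note this also guarantees $(k-1)\mu<1$, so $\boldsymbol{A}_T$ has full column rank and the pseudoinverse above is well defined. Combining with the reduction of the first paragraph completes the argument. I expect the only genuinely delicate step to be the Gram-matrix inversion: one must work in the $\ell^1\to\ell^1$ operator norm rather than the spectral norm, so that the column-sum estimate feeds directly into the $\ell^1$-norm appearing in $\mathrm{ERC}$, and one must confirm that the Neumann series converges under the weaker condition $(k-1)\mu<1$ that the final inequality itself already supplies.
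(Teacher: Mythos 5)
Your argument is correct, and it is precisely the Exact Recovery Condition proof from the cited sources (Tropp's $\mathrm{ERC}$ bound $k\mu/(1-(k-1)\mu)<1$ combined with the dual-certificate/induction arguments for BP and OMP); the paper itself states this theorem as an imported result and gives no proof, so there is nothing in the text to diverge from. The only point worth making explicit in a write-up is that the dual certificate $\boldsymbol{v}=\boldsymbol{A}_T(\boldsymbol{A}_T^*\boldsymbol{A}_T)^{-1}\sgn(\boldsymbol{x}_T)$ yields \emph{uniqueness} of the $\ell^1$ minimizer because the off-support inequalities $|\boldsymbol{a}_j^*\boldsymbol{v}|<1$ are strict and $\boldsymbol{A}_T$ has full column rank, both of which your coherence estimates already supply.
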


Recovery is also stable under perturbation by noise when Basis Pursuit
(\ref{eq:l1}) is replaced with (\ref{eq:l1noise}).


\begin{theorem}[Donoho et al. \cite{doelte06}, Theorem 3.1]\label{thm:stable:coh}
Let $\boldsymbol{A}$, $\mu$ be as above and suppose that $(4k-1)\mu < 1$. Assume that $\boldsymbol{x}$ is $k$-sparse and we
have perturbed observations $\boldsymbol{b} = \boldsymbol{Ax} + \boldsymbol{z}$ with $\|\boldsymbol{z}\|_2 \leq \epsilon$.
Then the solution $\boldsymbol{x}^\#$ of the Basis Pursuit variant
\begin{equation}\label{BP:var}
\min \|\boldsymbol{x}'\|_1 \quad \mbox{\rm subject to} \quad \|\boldsymbol{Ax}' -\boldsymbol{b} \|_2 \leq \delta
\nonumber
\end{equation}
satisfies
\[
\|\boldsymbol{x}^\# - \boldsymbol{x}\|_2^2 \leq \frac{(\epsilon + \delta)^2}{1-\mu(4k-1)}.
\]
\end{theorem}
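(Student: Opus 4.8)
The plan is to control the error vector $\boldsymbol{h} := \boldsymbol{x}^\# - \boldsymbol{x}$ in $\ell_2$ by playing a data-fidelity \emph{upper} bound on $\|\boldsymbol{A}\boldsymbol{h}\|_2$ against a coherence-based \emph{lower} bound. First I would record the fidelity estimate. Since $\boldsymbol{x}^\#$ is feasible for (\ref{BP:var}) and $\boldsymbol{b}=\boldsymbol{Ax}+\boldsymbol{z}$ with $\|\boldsymbol{z}\|_2\le\epsilon$, the triangle inequality gives
\[
\|\boldsymbol{A}\boldsymbol{h}\|_2 \le \|\boldsymbol{A}\boldsymbol{x}^\#-\boldsymbol{b}\|_2 + \|\boldsymbol{b}-\boldsymbol{Ax}\|_2 \le \delta+\epsilon .
\]
Here I also use that the true $\boldsymbol{x}$ is feasible (which holds once $\delta\ge\epsilon$, so that $\|\boldsymbol{Ax}-\boldsymbol{b}\|_2=\|\boldsymbol{z}\|_2\le\delta$), since feasibility of $\boldsymbol{x}$ is what licenses the $\ell_1$ comparison in the next step.

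Next I would derive the \emph{cone condition}. Writing $T:=\supp\boldsymbol{x}$ with $|T|=k$, optimality of $\boldsymbol{x}^\#$ gives $\|\boldsymbol{x}^\#\|_1\le\|\boldsymbol{x}\|_1$, and since $\boldsymbol{x}$ vanishes off $T$,
\[
\|\boldsymbol{x}\|_1 \ge \|\boldsymbol{x}+\boldsymbol{h}\|_1 = \|\boldsymbol{x}_T+\boldsymbol{h}_T\|_1 + \|\boldsymbol{h}_{T^c}\|_1 \ge \|\boldsymbol{x}\|_1 - \|\boldsymbol{h}_T\|_1 + \|\boldsymbol{h}_{T^c}\|_1 ,
\]
so $\|\boldsymbol{h}_{T^c}\|_1\le\|\boldsymbol{h}_T\|_1$. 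Combined with Cauchy--Schwarz on the $k$-element set $T$ this yields the $\ell_1$--$\ell_2$ comparison
\[
\|\boldsymbol{h}\|_1 = \|\boldsymbol{h}_T\|_1 + \|\boldsymbol{h}_{T^c}\|_1 \le 2\|\boldsymbol{h}_T\|_1 \le 2\sqrt{k}\,\|\boldsymbol{h}_T\|_2 \le 2\sqrt{k}\,\|\boldsymbol{h}\|_2 .
\]

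The third step is the coherence lower bound, which I expect to be the technical heart. Expanding the quadratic form over the unit-norm columns $\boldsymbol{a}_r$ and using $|\langle \boldsymbol{a}_r,\boldsymbol{a}_s\rangle|\le\mu$ for $r\ne s$,
\[
\|\boldsymbol{A}\boldsymbol{h}\|_2^2 = \sum_r |h_r|^2 + \sum_{r\ne s} h_r\overline{h_s}\,\langle \boldsymbol{a}_r,\boldsymbol{a}_s\rangle \ge \|\boldsymbol{h}\|_2^2 - \mu\big(\|\boldsymbol{h}\|_1^2-\|\boldsymbol{h}\|_2^2\big) = (1+\mu)\|\boldsymbol{h}\|_2^2 - \mu\|\boldsymbol{h}\|_1^2 .
\]

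Finally I would assemble the pieces. Substituting $\|\boldsymbol{h}\|_1^2\le 4k\|\boldsymbol{h}\|_2^2$ into the coherence estimate gives $\|\boldsymbol{A}\boldsymbol{h}\|_2^2 \ge \big(1-\mu(4k-1)\big)\|\boldsymbol{h}\|_2^2$, and comparing with the fidelity bound $\|\boldsymbol{A}\boldsymbol{h}\|_2^2\le(\epsilon+\delta)^2$ produces
\[
\|\boldsymbol{h}\|_2^2 \le \frac{(\epsilon+\delta)^2}{1-\mu(4k-1)} ,
\]
which is the claim; the hypothesis $(4k-1)\mu<1$ is exactly what keeps the denominator positive. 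The only genuinely delicate point is the sign/absolute-value bookkeeping in the Gram-form expansion of the third step, namely isolating the unit diagonal so that the correction appears as $-\mu(\|\boldsymbol{h}\|_1^2-\|\boldsymbol{h}\|_2^2)$ rather than $-\mu\|\boldsymbol{h}\|_1^2$; it is precisely this recovered $+\mu\|\boldsymbol{h}\|_2^2$ term that sharpens the constant from $4k$ to $4k-1$.
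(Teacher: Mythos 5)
Your argument is correct and is essentially the proof given in the cited source (Donoho--Elad--Temlyakov); the paper itself only quotes this theorem without proof, so there is nothing in-paper to diverge from. You also rightly flag the implicit assumption $\delta \geq \epsilon$ needed for feasibility of the true $\boldsymbol{x}$, which the paper's statement omits but the original reference includes.
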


Theorems~\ref{thm:coh} and~\ref{thm:stable:coh} ensure that the solutions
of (\ref{eq:l1}) and (\ref{eq:l1noise}) correspond (exactly and approximately,
respectively) to the solution of (\ref{eq:l0}) for {\em all} $k$-sparse
$\boldsymbol{x}$.  For a broad class of dictionaries the coherence is of
order ${\cal O}(1/\sqrt{n})$,
see Sections 4 and 5 for random and Gabor dictionaries, respectively.
Hence, Theorems~\ref{thm:coh} and \ref{thm:stable:coh}
ensure (stable) recovery provided $k = {\cal O}(\sqrt{n})$.

In contrast to these ${\cal O}(\sqrt{n})$ thresholds, which are
valid for all $\boldsymbol{x}$,
Tropp \cite{tr06-2} developed a general framework for the analysis
of Basis Pursuit (\ref{eq:l1}), which is still based on the
coherence of a general dictionary, but shows that (\ref{eq:l1}) is often
successful for substantially larger $k$ than those considered in
Theorems~\ref{thm:coh}
and \ref{thm:stable:coh}.  This comes, however, at the cost
of assuming a random model on the sparse signal to be recovered. It
allows us to prove order ${\cal O}(n/ \log n)$ for $\boldsymbol{h}^A$ and  ${\cal O}(n/\log(n)^2)$ for $\boldsymbol{h}^R$
recoverability result for the
time-frequency-shift dictionary, Theorem~\ref{thm:bp_gabor_nlogn}.
We state the results of Tropp, where $\|\,\cdot\,\|_{2,2}$ denotes the operator norm given by $\| \boldsymbol{A} \|_{2,2}=\sup_{\|\boldsymbol{x}\|_2=1} \|\boldsymbol{Ax}\|_2$, and  $\boldsymbol{A}_\Lambda$ is the
restriction of a matrix $\boldsymbol{A}$ to the columns indexed by $\Lambda$.


\begin{theorem}[Tropp \cite{tr06-2}, Theorem 12]\label{thm:Tropp}
Let $\boldsymbol{A}$ be an $n \times N$ vector dictionary with unit norm columns
and coherence $\mu$. Suppose that $\Lambda$ is selected uniformly at
random among all subsets of $\{1,\hdots,N\}$ of size $k \geq 3$. Let
$s \geq 1$. Then
\begin{equation}\label{cond:Tropp1}
\sqrt{144 s \mu^2 k \log (k/2+1)} + \frac{2k}{N} \|\boldsymbol{A}\|_{2,2}^2 \leq e^{-1/4} \delta
\end{equation}
implies
\[
\P\left(\|\boldsymbol{A}_\Lambda^* \boldsymbol{A}_\Lambda - \Id\|_{2,2} \geq \delta\right) \leq
(k/2)^{-s}.
\]
\end{theorem}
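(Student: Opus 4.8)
The plan is to control the norm $\|\boldsymbol{A}_\Lambda^* \boldsymbol{A}_\Lambda - \Id\|_{2,2}$ by a moment estimate for the random subdictionary's Gram matrix and then convert that estimate into the stated tail bound by Markov's inequality. First I would write $\boldsymbol{A}_\Lambda^* \boldsymbol{A}_\Lambda = \Id + \boldsymbol{H}_\Lambda$, where $\boldsymbol{H}_\Lambda$ is the ``hollow'' Gram matrix whose diagonal vanishes (the columns of $\boldsymbol{A}$ have unit norm) and whose off-diagonal entries are the inner products $\langle \boldsymbol{a}_i, \boldsymbol{a}_j\rangle$ with $i,j\in\Lambda$, each bounded in modulus by the coherence $\mu$. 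The problem then reduces to bounding $\P(\|\boldsymbol{H}_\Lambda\|_{2,2}\geq\delta)$.

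Next I would replace the uniformly chosen size-$k$ set $\Lambda$ by a model in which each index $j\in\{1,\dots,N\}$ is kept independently with probability $k/N$, governed by i.i.d.\ selectors $\delta_j$. A comparison argument transfers a tail bound from the independent-selector model to the fixed-cardinality model, and, more importantly, it makes the randomness factor across coordinates, so that $\boldsymbol{H}_\Lambda=\sum_{i\neq j}\delta_i\delta_j\langle \boldsymbol{a}_i,\boldsymbol{a}_j\rangle\, \boldsymbol{e}_i\boldsymbol{e}_j^*$.

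The heart of the argument is a bound on the $q$-th moment $\E\|\boldsymbol{H}_\Lambda\|_{2,2}^q$. Here I would (i) apply a decoupling inequality to separate the two selectors $\delta_i$ and $\delta_j$, (ii) symmetrize to introduce Rademacher signs, and (iii) invoke Rudelson's selection lemma (the noncommutative Khintchine inequality for sums of rank-one random matrices) to control the resulting symmetrized sum. This produces exactly two contributions: a fluctuation term of order $\mu\sqrt{q\,k}$ carrying the logarithmic factor $\sqrt{\log(k/2+1)}$, and a mean term of order $(k/N)\|\boldsymbol{A}\|_{2,2}^2$. These match the two summands in (\ref{cond:Tropp1}).

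Finally I would pass from moments to the tail via $\P(\|\boldsymbol{H}_\Lambda\|_{2,2}\geq\delta)\leq\delta^{-q}\,\E\|\boldsymbol{H}_\Lambda\|_{2,2}^q$, choosing the moment order $q$ proportional to $\log(k/2+1)$; with this choice the constant $144$ and the $e^{-1/4}$ slack in the hypothesis are precisely what force the right-hand side down to $(k/2)^{-s}$. I expect the main obstacle to be Step (iii): extracting the sharp $\mu^2 k\log(k/2+1)$ dependence from Rudelson's lemma while keeping explicit, computable constants through decoupling and symmetrization, since the clean final bound hinges on tracking these constants carefully. A secondary technical point is rigorously justifying the transfer between the independent-selector and fixed-size sampling models.
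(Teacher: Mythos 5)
The paper does not prove this statement itself: it is quoted verbatim as Theorem 12 of Tropp's work \cite{tr06-2}, so there is no in-paper argument to compare against. Your outline faithfully reproduces the architecture of Tropp's original proof --- hollow Gram matrix, transfer to the independent-selector model, decoupling and symmetrization, Rudelson's lemma for the $q$-th moment, and Markov with $q\propto\log(k/2+1)$ --- and correctly identifies the constant-tracking in the noncommutative Khintchine step as the main technical burden, so the approach is sound and matches the source.
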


\begin{theorem}[Tropp \cite{tr06-2}, Theorem 13]\label{thm:Tropp2}
Let $\boldsymbol{A}$ be an $n \times N$ dictionary with coherence $\mu$. Suppose
$\Lambda \subseteq \{1,\hdots,N\}$ of cardinality $k$ ($|\Lambda|=k$)
is such that
\[
\|\boldsymbol{A}_{\Lambda}^* \boldsymbol{A}_{\Lambda} - \Id\|_{2,2} \leq 1/2.
\]
Suppose that
$\boldsymbol{x} \in \C^N$ has support $\Lambda$ with random phases $\sgn(x_r)$, $r\in T$,
that are independent and uniformly
distributed on the torus $\{z,|z|=1\}$. Then with probability at
least $1-2Ne^{-1/(8\mu^2 k)}$ the sparse vector $\boldsymbol{x}$ can be recovered from $\boldsymbol{b} = \boldsymbol{Ax}$
by Basis Pursuit.
\end{theorem}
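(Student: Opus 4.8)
The plan is to establish recovery through the standard dual-certificate (inexact duality) characterization of Basis Pursuit minimizers, exploiting the random phases only in the final, probabilistic step. Recall that a vector $\boldsymbol{x}$ supported on $\Lambda$ is the unique solution of (\ref{eq:l1}) with $\boldsymbol{b}=\boldsymbol{Ax}$ provided $\boldsymbol{A}_\Lambda$ has full column rank and there exists a dual vector $\boldsymbol{w}$ with $\boldsymbol{A}_\Lambda^*\boldsymbol{w}=\sgn(\boldsymbol{x}_\Lambda)$ and $|\boldsymbol{a}_r^*\boldsymbol{w}|<1$ for every $r\notin\Lambda$. The hypothesis $\|\boldsymbol{A}_\Lambda^*\boldsymbol{A}_\Lambda-\Id\|_{2,2}\le 1/2$ guarantees that $\boldsymbol{A}_\Lambda^*\boldsymbol{A}_\Lambda$ is invertible (so $\boldsymbol{A}_\Lambda$ has full column rank) and moreover $\|(\boldsymbol{A}_\Lambda^*\boldsymbol{A}_\Lambda)^{-1}\|_{2,2}\le 2$. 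Thus the whole argument reduces to exhibiting a feasible dual vector with high probability over the random sign vector $\boldsymbol{\sigma}:=\sgn(\boldsymbol{x}_\Lambda)$.

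The canonical choice is $\boldsymbol{w}=\boldsymbol{A}_\Lambda(\boldsymbol{A}_\Lambda^*\boldsymbol{A}_\Lambda)^{-1}\boldsymbol{\sigma}$, which satisfies $\boldsymbol{A}_\Lambda^*\boldsymbol{w}=\boldsymbol{\sigma}$ by construction. Setting
\[
c_r \;:=\; \boldsymbol{a}_r^*\boldsymbol{w} \;=\; \langle \boldsymbol{v}_r,\boldsymbol{\sigma}\rangle,\qquad \boldsymbol{v}_r:=(\boldsymbol{A}_\Lambda^*\boldsymbol{A}_\Lambda)^{-1}\boldsymbol{A}_\Lambda^*\boldsymbol{a}_r,
\]
it remains to show $\max_{r\notin\Lambda}|c_r|<1$ with the stated probability. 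For each fixed $r\notin\Lambda$ the quantity $c_r$ is a fixed linear functional of the independent entries of $\boldsymbol{\sigma}$, which is exactly the structure needed for a concentration argument.

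First I would control $\boldsymbol{v}_r$ deterministically. Since $r\notin\Lambda$, every entry of $\boldsymbol{A}_\Lambda^*\boldsymbol{a}_r$ is an inner product $\langle\boldsymbol{a}_s,\boldsymbol{a}_r\rangle$ with $s\neq r$, hence bounded by $\mu$ in modulus; there are $k$ such entries, so $\|\boldsymbol{A}_\Lambda^*\boldsymbol{a}_r\|_2\le\sqrt{k}\,\mu$. Combined with $\|(\boldsymbol{A}_\Lambda^*\boldsymbol{A}_\Lambda)^{-1}\|_{2,2}\le2$ this gives $\|\boldsymbol{v}_r\|_2\le 2\sqrt{k}\,\mu$. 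The random phases now enter: conditional on $\Lambda$, the vector $\boldsymbol{v}_r$ is fixed and $c_r=\sum_{s\in\Lambda}\overline{(v_r)_s}\,\sigma_s$ is a sum of independent, mean-zero, complex random variables. Applying a Hoeffding-type tail bound for weighted sums of independent uniform phases, of the form $\P(|\langle\boldsymbol{v}_r,\boldsymbol{\sigma}\rangle|\ge u)\le 2\exp(-u^2/(2\|\boldsymbol{v}_r\|_2^2))$, at level $u=1$ yields
\[
\P(|c_r|\ge 1)\;\le\; 2\exp\!\Big(-\frac{1}{2\|\boldsymbol{v}_r\|_2^2}\Big)\;\le\; 2\exp\!\Big(-\frac{1}{8\mu^2 k}\Big).
\]
A union bound over the fewer than $N$ indices $r\notin\Lambda$ then bounds the failure probability by $2N\exp(-1/(8\mu^2 k))$, which is the claim.

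The step I expect to be the crux is the concentration inequality with the correct constant: one must use a genuinely complex (two-dimensional) subgaussian tail bound for $\sum_s (v_r)_s\sigma_s$ with $\sigma_s$ uniform on the torus, rather than a real-valued Hoeffding bound, and track the numerical factor so that the exponent comes out exactly $1/(8\mu^2 k)$. Everything else --- the dual characterization, the invertibility and norm estimates from $\|\boldsymbol{A}_\Lambda^*\boldsymbol{A}_\Lambda-\Id\|_{2,2}\le 1/2$, the coherence bound $\|\boldsymbol{A}_\Lambda^*\boldsymbol{a}_r\|_2\le\sqrt{k}\,\mu$, and the union bound --- is routine once that inequality is in place.
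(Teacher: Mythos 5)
This theorem is quoted in the paper as an external result (Tropp \cite{tr06-2}, Theorem 13) and the paper supplies no proof of it, so there is no internal argument to compare against. Your proposal is a correct reconstruction of Tropp's own proof: the Fuchs--Tropp dual certificate $\boldsymbol{w}=\boldsymbol{A}_\Lambda(\boldsymbol{A}_\Lambda^*\boldsymbol{A}_\Lambda)^{-1}\sgn(\boldsymbol{x}_\Lambda)$, the deterministic bounds $\|(\boldsymbol{A}_\Lambda^*\boldsymbol{A}_\Lambda)^{-1}\|_{2,2}\le 2$ (eigenvalues in $[1/2,3/2]$) and $\|\boldsymbol{A}_\Lambda^*\boldsymbol{a}_r\|_2\le\sqrt{k}\,\mu$ for $r\notin\Lambda$, followed by a Steinhaus concentration bound and a union bound over the at most $N$ off-support columns. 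The one ingredient you flag as the crux --- the tail bound $\P(|\langle\boldsymbol{v},\boldsymbol{\sigma}\rangle|\ge u)\le 2\exp(-u^2/(2\|\boldsymbol{v}\|_2^2))$ for uniform phases --- is exactly the weighted form of the complex Bernstein inequality (\ref{eq:bernstein}) that the paper itself imports from \cite[Proposition 15]{tr06-2}, and with $\|\boldsymbol{v}_r\|_2\le 2\sqrt{k}\,\mu$ it yields the exponent $1/(8\mu^2 k)$ exactly as claimed. I see no gap; the argument is complete modulo that standard inequality.
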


\subsection{Restricted isometry property}\label{subsec:rip}


Cand{\`e}s, Romberg and Tao introduced
the Restricted Isometry Property (RIP) which is an alternative
perspective to coherence \cite{carota06-1,cata06}.

\begin{definition}
\label{def:rip}
Let $\boldsymbol{A} \in \CC^{n\times N}$ and $k < n$.
The restricted isometry constant $\delta_k = \delta_k(\boldsymbol{A})$ is the smallest
number such that 
\[
(1-\delta_k) \|\boldsymbol{x}\|_2^2 \leq \|\boldsymbol{A x}\|_2^2 \leq (1+\delta_k) \|\boldsymbol{x}\|_2^2
\]
for all $k$-sparse $\boldsymbol{x}$.
\end{definition}

$\boldsymbol{A}$ is said to satisfy the restricted isometry property
if it has small isometry constants, say $\delta_k < 1/2$; such
matrices allow stable sparse recovery by Basis Pursuit.

\begin{theorem}[Cand{\`e}s, Romberg and Tao \cite{carota06-1}]\label{thm_bp_stab}
Assume that the restricted isometry constants of $\boldsymbol{A}$ satisfy
\[
\delta_{3k} + 3 \delta_{4k} < 2.
\]
Let $\boldsymbol{x} \in \C^N$ and assume we have noisy data
$\boldsymbol{y}=\boldsymbol{Ax}+\boldsymbol{\eta}$ with $\|\boldsymbol{\eta}\|_2 \leq \epsilon$.
Denote by $\boldsymbol{x}^k$ the truncated vector corresponding
to the $k$ largest absolute values of $\boldsymbol{x}$. Then the solution $\boldsymbol{x}^\#$
of (\ref{eq:l1noise}) satisfies
\begin{equation*}
\|\boldsymbol{x}^\# - \boldsymbol{x}\|_2 \,\leq\, C_1 \epsilon + C_2 \frac{\|\boldsymbol{x}-\boldsymbol{x}^k\|_1}{\sqrt{k}}.
\end{equation*}
The constants $C_1$ and $C_2$ depend only on $\delta_{3k}$ and $\delta_{4k}$.
\end{theorem}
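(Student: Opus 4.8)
The plan is to run the standard variational argument for $\ell_1$ recovery under the restricted isometry property, controlling the error vector $\boldsymbol{d}:=\boldsymbol{x}^\#-\boldsymbol{x}$ by splitting its coordinates into a \emph{head} on which a lower restricted-isometry bound is available and a \emph{tail} that is tamed by the $\ell_1$ geometry of the minimizer. Throughout, for an index set $T$ let $\boldsymbol{d}_T$ denote the vector agreeing with $\boldsymbol{d}$ on $T$ and vanishing off $T$. First I would record the two structural facts forced by the program (\ref{eq:l1noise}). Since $\|\boldsymbol{y}-\boldsymbol{Ax}\|_2=\|\boldsymbol{\eta}\|_2\le\epsilon$, the true $\boldsymbol{x}$ is feasible, so the minimizer obeys $\|\boldsymbol{x}^\#\|_1\le\|\boldsymbol{x}\|_1$; and by the triangle inequality $\|\boldsymbol{Ad}\|_2\le\|\boldsymbol{Ax}^\#-\boldsymbol{y}\|_2+\|\boldsymbol{y}-\boldsymbol{Ax}\|_2\le2\epsilon$, the so-called tube constraint.

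Next I would set up the block decomposition. Let $T_0$ be the support of $\boldsymbol{x}^k$, i.e.\ the positions of the $k$ largest entries of $\boldsymbol{x}$. Order the coordinates of $\boldsymbol{d}$ on $T_0^c$ by decreasing magnitude and cut them into consecutive blocks $T_1,T_2,\dots$ each of size $2k$, and put $T_{01}=T_0\cup T_1$, so $|T_{01}|=3k$. Two estimates then reduce everything to the head $\boldsymbol{d}_{T_{01}}$. The \emph{cone estimate} comes from $\|\boldsymbol{x}\|_1\ge\|\boldsymbol{x}+\boldsymbol{d}\|_1$: splitting the $\ell_1$ norm over $T_0$ and $T_0^c$ and applying the reverse triangle inequality gives
\[
\|\boldsymbol{d}_{T_0^c}\|_1\le\|\boldsymbol{d}_{T_0}\|_1+2\|\boldsymbol{x}-\boldsymbol{x}^k\|_1 .
\]
The \emph{tail estimate} comes from the magnitude ordering: each entry of $\boldsymbol{d}_{T_{j+1}}$ is at most the average magnitude on $T_j$, so $\|\boldsymbol{d}_{T_{j+1}}\|_2\le(2k)^{-1/2}\|\boldsymbol{d}_{T_j}\|_1$; summing, inserting the cone estimate, and using Cauchy--Schwarz ($\|\boldsymbol{d}_{T_0}\|_1\le k^{1/2}\|\boldsymbol{d}_{T_{01}}\|_2$) yields
\[
\sum_{j\ge2}\|\boldsymbol{d}_{T_j}\|_2\ \le\ 2^{-1/2}\|\boldsymbol{d}_{T_{01}}\|_2+\sqrt{2}\,k^{-1/2}\|\boldsymbol{x}-\boldsymbol{x}^k\|_1 .
\]
Since $\|\boldsymbol{d}\|_2\le\|\boldsymbol{d}_{T_{01}}\|_2+\sum_{j\ge2}\|\boldsymbol{d}_{T_j}\|_2$, it now suffices to bound the head.

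The crux is the \emph{restricted-isometry step}. Expanding $\boldsymbol{Ad}=\boldsymbol{Ad}_{T_{01}}+\sum_{j\ge2}\boldsymbol{Ad}_{T_j}$ and pairing with $\boldsymbol{Ad}_{T_{01}}$ gives
\[
\|\boldsymbol{Ad}_{T_{01}}\|_2^2=\langle\boldsymbol{Ad}_{T_{01}},\boldsymbol{Ad}\rangle-\sum_{j\ge2}\langle\boldsymbol{Ad}_{T_{01}},\boldsymbol{Ad}_{T_j}\rangle .
\]
The left side is at least $(1-\delta_{3k})\|\boldsymbol{d}_{T_{01}}\|_2^2$ by the lower bound of Definition~\ref{def:rip} applied to the $3k$-sparse $\boldsymbol{d}_{T_{01}}$. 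For the first term on the right I would use Cauchy--Schwarz, the upper bound $\|\boldsymbol{Ad}_{T_{01}}\|_2\le\sqrt{1+\delta_{3k}}\,\|\boldsymbol{d}_{T_{01}}\|_2$, and the tube constraint $\|\boldsymbol{Ad}\|_2\le2\epsilon$. The cross terms are controlled by the near-orthogonality lemma: for $\boldsymbol{u},\boldsymbol{v}$ with disjoint supports $I,J$ one has $|\langle\boldsymbol{Au},\boldsymbol{Av}\rangle|\le\delta_{|I|+|J|}\|\boldsymbol{u}\|_2\|\boldsymbol{v}\|_2$, which follows by applying the two-sided RIP of order $|I|+|J|$ to $\boldsymbol{u}/\|\boldsymbol{u}\|_2\pm\boldsymbol{v}/\|\boldsymbol{v}\|_2$ and polarizing. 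Splitting $\boldsymbol{d}_{T_{01}}=\boldsymbol{d}_{T_0}+\boldsymbol{d}_{T_1}$, the lemma applies with $|T_0|+|T_j|=3k$ and $|T_1|+|T_j|=4k$, bringing in $\delta_{3k}$ and $\delta_{4k}$; substituting the tail estimate then produces a self-referential inequality of the form
\[
\bigl(1-\delta_{3k}-c\,\delta_{4k}\bigr)\,\|\boldsymbol{d}_{T_{01}}\|_2\ \le\ C_1\epsilon+C_2\,k^{-1/2}\|\boldsymbol{x}-\boldsymbol{x}^k\|_1 .
\]

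The hypothesis $\delta_{3k}+3\delta_{4k}<2$ is exactly what keeps the coefficient $1-\delta_{3k}-c\,\delta_{4k}$ bounded away from zero, so dividing through bounds $\|\boldsymbol{d}_{T_{01}}\|_2$, and hence $\|\boldsymbol{d}\|_2$, by the asserted right-hand side. I expect the main obstacle to be the constant bookkeeping at this last step: one must fix the common block size and the decision to merge $T_0$ with exactly one further block $T_1$ so that \emph{only} the orders $3k$ and $4k$ appear, and then track the multiplicative factors (the $\sqrt{1+\delta_{3k}}$, the $2\epsilon$, the Cauchy--Schwarz factor $k^{1/2}$, and the reductions $\|\boldsymbol{d}_{T_0}\|_2,\|\boldsymbol{d}_{T_1}\|_2\le\|\boldsymbol{d}_{T_{01}}\|_2$) carefully enough that the threshold emerges in the sharp form $\delta_{3k}+3\delta_{4k}<2$ rather than a cruder variant. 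The near-orthogonality lemma itself is routine once polarization is combined with Definition~\ref{def:rip}, and the monotonicity $\delta_{3k}\le\delta_{4k}$ is what allows the two distinct orders to be consolidated into the single stated inequality.
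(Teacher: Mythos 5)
This theorem is one the paper imports verbatim from Cand\`es--Romberg--Tao \cite{carota06-1}; the authors give no proof of it, so there is no internal argument to compare yours against --- the only honest benchmark is the original reference. Measured against that, your reconstruction has the right skeleton and is essentially the standard argument: feasibility of $\boldsymbol{x}$ giving $\|\boldsymbol{x}^\#\|_1\le\|\boldsymbol{x}\|_1$, the tube constraint $\|\boldsymbol{Ad}\|_2\le 2\epsilon$, the cone estimate with the $2\|\boldsymbol{x}-\boldsymbol{x}^k\|_1$ compressibility penalty, and crucially the correct block size $2k$ on $T_0^c$, which is exactly what makes $|T_{01}|=3k$, $|T_0\cup T_j|=3k$ and $|T_1\cup T_j|=4k$, so that only $\delta_{3k}$ and $\delta_{4k}$ appear. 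The one place where your sketch, taken literally, does not land on the stated hypothesis is the final step: bounding the cross terms by $\delta_{3k}\|\boldsymbol{d}_{T_0}\|_2+\delta_{4k}\|\boldsymbol{d}_{T_1}\|_2\le(\delta_{3k}+\delta_{4k})\|\boldsymbol{d}_{T_{01}}\|_2$ and then inserting the factor $1/\sqrt2$ from the tail estimate yields a positivity condition of the form $(1+\tfrac{1}{\sqrt2})\delta_{3k}+\tfrac{1}{\sqrt2}\delta_{4k}<1$, which is strictly more restrictive than $\delta_{3k}+3\delta_{4k}<2$ in part of the parameter range. To recover the stated threshold you need the two refinements used in \cite{carota06-1}: replace the sum by the Cauchy--Schwarz consolidation $\delta_{3k}\|\boldsymbol{d}_{T_0}\|_2+\delta_{4k}\|\boldsymbol{d}_{T_1}\|_2\le\sqrt{\delta_{3k}^2+\delta_{4k}^2}\,\|\boldsymbol{d}_{T_{01}}\|_2$ (using $\|\boldsymbol{d}_{T_0}\|_2^2+\|\boldsymbol{d}_{T_1}\|_2^2=\|\boldsymbol{d}_{T_{01}}\|_2^2$), and then observe that, by monotonicity $\delta_{3k}\le\delta_{4k}$, the hypothesis $\delta_{3k}+3\delta_{4k}<2$ implies $1-\delta_{3k}-\tfrac{1}{\sqrt2}\sqrt{\delta_{3k}^2+\delta_{4k}^2}\ge 1-\delta_{3k}-\delta_{4k}>0$. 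You correctly anticipated that this bookkeeping is where the difficulty lies; with that repair the argument is complete and faithful to the cited source.
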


Note that for $\boldsymbol{x}$ $k$-sparse and noise level $\epsilon = 0$,
Theorem~\ref{thm_bp_stab} guarantees exact recovery
of $\boldsymbol{x}$ by (\ref{eq:l1}).

\section{Random matrices}
\label{Sec:Random}

Many of the recent results in sparse signal recovery with
recoverability thresholds for $k\le C n/\log n$ either
assume that $\boldsymbol{A}$ is a random Gaussian or Bernoulli matrix
\cite{badadewa06,cata06,do04,rascva06}, or partial random Fourier
matrix \cite{carota06,kura06,ra05-7,ra06,ru06-1}. Recoverability
results in these cases can be obtained by establishing the restricted
isometry property, see Definition~\ref{def:rip}, or through a
careful analysis of the geometric structure of the convex hull
associated with the columns of $\boldsymbol{A}$ \cite{dota05,do05,do06-1,dota05-1,dota06}. We
apply these results to the  matrix identification problem when the
matrix has a sparse representation in terms of certain random
matrices.

\subsection{Gaussian matrix ensemble}\label{subsec:gaussian}

Assume all entries of the $N$ matrices $\BPsi_j \in \R^{n\times m}$ in $\BPsi$
are independent standard Gaussian random variables and $\boldsymbol{h}$ is an arbitrary
non-zero vector in $\R^m$.
Then the entries of the dictionary $\boldsymbol{A} = (\BPsi \boldsymbol{h}) \in
\R^{n \times N}$ whose columns are given by $\BPsi_j \boldsymbol{h}$,
$j=1,\hdots,N$, are jointly independent and of the form
$Z = \sum_{\ell=1}^n g_\ell h_\ell$
where the $g_\ell$ are independent standard Gaussian random
variables. By rotational invariance of the distribution of the
Gaussian vector $(g_1,\hdots,g_n)$ the random variable $Z$ has the
same distribution as $\|\boldsymbol{h}\|_2 g$ where $g$ is a (scalar-valued)
standard Gaussian. Hence, the dictionary $(\BPsi \boldsymbol{h})$ has the same
distribution as
$\|\boldsymbol{h}\|_2 \boldsymbol{A} \in \R^{n \times N},$
where $\boldsymbol{A}$ is a random matrix whose entries are independent standard
Gaussians. Thus, the existing literature in sparse approximation
concerning Gaussian matrices applies, see for instance
\cite{badadewa06,cata06,do04,dota06,rascva06} and additional
results discussed in the remainder of this section.

In particular, the restricted isometry property ensures
stable recovery with probability at
least $1-\varepsilon$ provided \cite{badadewa06,cata06,rascva06}
\begin{equation}\label{eq:rip_small}
k \leq c \frac{n}{\log(\frac{N}{n\varepsilon})}.
\end{equation}
Hence, by Theorem~\ref{thm_bp_stab} we have stable recovery by
(\ref{eq:l1noise})
in this regime and the statement of Theorem \ref{thm:Random}(a) follows.

The work of Donoho and Tanner \cite{do05,do06-1} actually
allows for a stronger statement than (\ref{eq:rip_small})
in the context of noise-free and
exact $k$-sparse vectors $\boldsymbol{x}$. A simple version of their results says that
most $k$-sparse $\op$ can be recovered with high probability by Basis Pursuit
provided $k \leq \frac{n}{2\log(N/n)}$. For details we refer
to \cite{do05,do06-1}, and for extension to the noisy setting
to Wainwright's work \cite{wa06}.

\subsection{Bernoulli matrix ensemble}\label{subsec:bernoulli}

The recoverability results for Bernoulli matrices in Theorem~\ref{thm:Random}(b) are based on establishing the restricted isometry property given in Definition~\ref{def:rip}.

To this end,  we assume that
the entries of the $N$ matrices $\BPsi_j \in \R^{n \times m}$ in $\BPsi$  are selected as independent $\pm 1$ Bernoulli variables, that is, $+1$ or $-1$ with equal probability, and let $\boldsymbol{h}$ be an
arbitrary non-zero vector. Then an entry of the dictionary $\boldsymbol{A} =
(\BPsi \boldsymbol{h})$ is given by
\begin{equation}\label{def:Bernoulli}
a_{pq} = \sum_{\ell=1}^n \epsilon_\ell^{pq} h_\ell,\quad
p=1,\hdots,m,~q=1,\hdots,N,
\end{equation}
where the $\epsilon_\ell^{pq}$ are independent Bernoulli variables,
that is, the $a_{pq}$ are independent Rademacher series \cite{leta91}.
Theorem~\ref{thm:bernoulli_rip}
shows that the matrix $\boldsymbol{A}$ has the restricted isometry property
with high probability for
sparsities $k$ that are nearly linear in $m$. Hence, by
Theorem~\ref{thm_bp_stab}, for an arbitrary non-zero choice of $\boldsymbol{h}$
we can recover any $\op$ having a $k$-sparse representation in terms
of random Bernoulli matrices from the action of $\op \boldsymbol{h}$ through
Basis Pursuit (\ref{eq:l1}).

\begin{theorem}\label{thm:bernoulli_rip}
Let $\boldsymbol{h} \in \R^m$ be normalized by $\|\boldsymbol{h}\|_2 = 1/\sqrt{m}$. Let
$\boldsymbol{A}$ be the random matrix with entries defined in (\ref{def:Bernoulli}).
Assume $\delta \in (0,1)$ and $t > 0$. If
\begin{equation} \label{cond:Bernoulli}
n \geq C_1 \delta^{-2}(k \log(N/k) + \log(2e+24e/\delta) + t).
\end{equation}
Then with probability at least $1- e^{-t}$ the restricted isometry
property is satisfied, that is, for all $\Lambda \subset
\{1,\hdots,N\}$ of cardinality at most $k$ it holds that
\[
(1-\delta) \|\boldsymbol{x} \|_2^2 \leq \|\boldsymbol{A x}\|_2^2
\leq (1+\delta) \|\boldsymbol{x}\|_2^2
\]
for all $\boldsymbol{x}$ supported on $\Lambda$. The constant satisfies
$C_1 \leq 23.15$.
\end{theorem}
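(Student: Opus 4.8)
The plan is to establish the restricted isometry property for $\boldsymbol{A}$ by combining a pointwise concentration inequality for $\|\boldsymbol{A}\boldsymbol{x}\|_2^2$ with the covering-net and union-bound machinery of Baraniuk, Davenport, DeVore and Wakin \cite{badadewa06}; the recovery statement then follows from Theorem~\ref{thm_bp_stab}. The structural input is that the entries $a_{pq}$ in (\ref{def:Bernoulli}) are mutually independent: the variables $\epsilon_\ell^{pq}$ carry distinct triple indices $(\ell,p,q)$, so distinct entries are built from disjoint families of Bernoulli variables. Each $a_{pq}$ is a centered Rademacher series $\sum_\ell \epsilon_\ell^{pq} h_\ell$, and by Hoeffding's lemma it is subgaussian with variance proxy $\|\boldsymbol{h}\|_2^2$, \emph{uniformly} in how the mass of $\boldsymbol{h}$ is distributed. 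This uniform control is exactly what lets the theorem hold for an arbitrary normalized $\boldsymbol{h}$.

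Fix $\boldsymbol{x}$ supported on a set of size $k$ with $\|\boldsymbol{x}\|_2=1$. Each coordinate $(\boldsymbol{A}\boldsymbol{x})_p=\sum_{\ell,q}(h_\ell x_q)\,\epsilon_\ell^{pq}$ is itself a Rademacher series whose coefficient vector has $\ell^2$-norm $\|\boldsymbol{h}\|_2\|\boldsymbol{x}\|_2$, and these coordinates are independent across the $n$ rows. With $\boldsymbol{h}$ normalized so that each column has expected squared norm one (i.e.\ $\E\|\boldsymbol{A}\boldsymbol{x}\|_2^2=\|\boldsymbol{x}\|_2^2$), the quantity $\|\boldsymbol{A}\boldsymbol{x}\|_2^2=\sum_{p=1}^n (\boldsymbol{A}\boldsymbol{x})_p^2$ is a sum of $n$ i.i.d.\ centered subexponential random variables (squares of subgaussians, recentered). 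The key step is then to bound the moment generating function of each summand and apply a Bernstein inequality to obtain
\[
\P\big(\,\big|\|\boldsymbol{A}\boldsymbol{x}\|_2^2-\|\boldsymbol{x}\|_2^2\big|\ge \delta\,\|\boldsymbol{x}\|_2^2\,\big)\ \le\ 2\,e^{-c\,n\,\delta^2},\qquad \delta\in(0,1),
\]
with an explicit absolute constant $c$.

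To upgrade this pointwise bound to the uniform RIP statement I would, for each fixed support $\Lambda$ with $|\Lambda|=k$, choose a $\rho$-net of the unit sphere of the coordinate subspace $\R^\Lambda$ (cardinality at most $(1+2/\rho)^k$), control $\|\boldsymbol{A}\boldsymbol{q}\|_2^2$ simultaneously over all net points by the concentration inequality together with a union bound, and then pass from the net back to the whole sphere to bound the isometry constant of $\boldsymbol{A}_\Lambda$ by $\delta$ (at the cost of a fixed inflation of $\rho$). A final union bound over the $\binom{N}{k}\le (eN/k)^k$ choices of $\Lambda$, with the net radius $\rho$ tuned to $\delta$, reduces the statement to requiring the collected exponent to dominate $t$; solving for $n$ produces the stated condition (\ref{cond:Bernoulli}).

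The main obstacle is the concentration inequality with good constants. Passing from subgaussianity of the entries to a subexponential tail for $\|\boldsymbol{A}\boldsymbol{x}\|_2^2$ is routine, but the numerical value $C_1\le 23.15$ forces one to (i) control the subexponential moment generating function of a squared Rademacher series tightly, (ii) optimize the Bernstein tradeoff in $\delta$, and (iii) track the contributions of the net cardinality and the binomial factor so that they package into the term $\log(2e+24e/\delta)$. The remaining net-and-union-bound steps are standard once the sharp concentration estimate and its constant are in hand.
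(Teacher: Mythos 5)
Your proposal is correct and follows essentially the same route as the paper: independence of the row sums $X_p=\sum_{q,\ell}\epsilon_\ell^{pq}h_\ell v_q$, a pointwise concentration inequality for $\|\boldsymbol{A v}\|_2^2$ about $\|\boldsymbol{v}\|_2^2$, and then the standard net-plus-union-bound reduction from concentration to RIP, which the paper simply cites (Theorem 2.2 of \cite{rascva06}, cf.\ Theorem 5.2 of \cite{badadewa06}) rather than re-deriving. The one technical difference is in the concentration step: where you propose a subgaussian-to-subexponential Bernstein argument and correctly flag the constant $C_1\le 23.15$ as the obstacle, the paper uses Khintchine's inequality to dominate the even moments of $X_p$ by those of a Gaussian and then invokes Achlioptas's lemmas to inherit the Gaussian-case bound $2\exp\bigl(-\tfrac{n}{2}(\epsilon^2/2-\epsilon^3/3)\bigr)$ verbatim, so the explicit constant comes for free from the cited RIP theorem.
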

\begin{proof}
Let $\boldsymbol{v} \in \R^N$ be an arbitrary vector. We form the inner product
of a row of $\boldsymbol{A}$ with $\boldsymbol{v}$,
\[
X_p \,\defleft\, \sum_{q=1}^n a_{pq} v_q \,=\, \sum_{q=1}^N
\sum_{\ell=1}^n \epsilon_\ell^{pq} h_\ell v_q.
\]
By independence of the $\epsilon_\ell^{pq}$, the $X_p$ are similarly
independent. By Khintchine's inequality the even moments of
$X$ can be estimated by the moments of a standard Gaussian variable
$g$ \cite{leta91,pesh95}
\begin{eqnarray*}
  \E[|X_p|^{2z}] &\leq& \|\boldsymbol{v}\|_2 \|\boldsymbol{h}\|_2 \frac{(2z)!}{2^z z!}= \|\boldsymbol{v}\|_2 \|\boldsymbol{h}\|_2 \E[|g|^{2z}],
\quad z \in \N.
\end{eqnarray*}

Following Lemma 5 and the proof of Lemma 6 in
\cite{ac01} this implies the concentration inequality,
\begin{eqnarray*}
  \P(|\|\boldsymbol{A v}\|_2^2 - \|\boldsymbol{v}\|_2^2| \geq \epsilon \|\boldsymbol{v}\|_2^2)
\leq 2 \exp\left(-\tfrac{n}{2}(\epsilon^2/2 - \epsilon^3/3)\right).
\end{eqnarray*}
By Theorem 2.2 in \cite{rascva06}, see also
Theorem 5.2 in \cite{badadewa06}, this implies that
the restricted isometry property holds
under the stated condition on $n$. The estimate of the constant $C_1$ follows
 from \cite[Theorem 2.2]{rascva06} as well.
\end{proof}

Note that for fixed $\delta$ and $t$ condition (\ref{cond:Bernoulli}) can be
rewritten as
\[
k \leq c n/\log(N/k)
\]
for some constant $c$.

Combining Theorems~\ref{thm_bp_stab} and \ref{thm:bernoulli_rip}
yields Theorem~\ref{thm:Random}(b).

\subsection{Diagonal matrices}\label{subsec:fourier}

Diagonal matrices act as multiplication operators on $\C^n$. Using a Fourier expansion of the diagonal, we observe that any diagonal matrix can be expressed as linear combination of modulation operators $\boldsymbol{M}_\ell \in \C^{n\times n}$, $\ell=0,\hdots,n{-}1$,
defined in (\ref{eq:trans_mod}). We now consider the case that only a small number of components of the output of a diagonal operator $\op$ can be measured; the assumption that $\op$ is  sparse in the dictionary of modulation operators shall be used to recover $\op$ from these components.

To this end, let $\Omega$ be a subset of $\{0,\hdots,n{-}1\}$ of cardinality
$m$ and denote by $\boldsymbol{M}_\ell^\Omega \in \C^{m \times m}$ the submatrix
of $\boldsymbol{M}_\ell$ with columns and rows restricted to the index set
$\Omega$. Let
\[
\BPsi^\Omega = \{\boldsymbol{M}^\Omega_\ell, \ell=0,\hdots,n{-}1\}
\]
and $\boldsymbol{h}={\mathbf 1} = (1,\hdots,1)^T$. If $\op^\Omega =
\sum_{\ell=0}^{n{-}1} x_\ell \boldsymbol{M}_\ell^\Omega$ then $\op^\Omega {\mathbf
1}$ coincides with the restriction of $\op {\mathbf 1} =
\sum_{\ell=0}^{n{-}1} x_\ell \boldsymbol{M}_\ell {\mathbf 1}$ to the indices in
$\Omega$.

The matrix $\boldsymbol{A}$ whose columns are the elements of the dictionary
$(\BPsi^\Omega {\mathbf 1})  = \{\boldsymbol{M}_\ell^\Omega {\mathbf 1} , \ell=0,\hdots,n{-}1\}$
is precisely a row submatrix of the Fourier matrix,
\[
\boldsymbol{A} \,=\, \boldsymbol{A}^\Omega \,=\, (e^{2\pi i r \ell})_{r \in \Omega,\ell=0,\hdots,n{-}1} \in \C^{m \times n}.
\]
If the subset $\Omega$ is chosen uniformly at random among all
subsets of size $m$ then $\boldsymbol{A}^\Omega$ is a random matrix. This random
partial Fourier matrix was studied in \cite{carota06,cata06,ru06-1},
see also \cite{ra05-7} for a slight variation. Indeed, under the
condition
\[
k \leq c\, \frac{m}{\log^4(n)\log(\varepsilon^{-1})}
\]
the restricted isometry property holds with probability
at least $1-\varepsilon$
\cite{ru06-1} and by Theorem~\ref{thm_bp_stab} we obtain stable
recovery of all matrices having a sparse representation in terms of
$\BPsi^\Omega$.

\section{Time-frequency shift dictionaries}
\label{sec:gabor}




In this section we establish coherence results for the dictionary of
time-frequency shift matrices and prove Theorems~\ref{cor1}
and~\ref{thm:bp_gabor_nlogn}.

\subsection{Coherence for the time-frequency shift dictionary}\label{subsec:gabor_coherence}
We apply known recovery results
\cite{doelte06,grva06,tr04,tr06-3,tr06-2} for dictionaries with
small coherence (\ref{eq:coherence}).  Assuming $\|\boldsymbol{h}\|_2=1$,
the coherence, (\ref{eq:coherence}), of Gabor systems is
\begin{equation}\label{def:coherence1}
\mu \,\defleft\, \max_{(\ell,p) \neq (\ell',p')} |\langle \boldsymbol{M}_\ell \boldsymbol{T}_p h,
\boldsymbol{M}_{\ell'} \boldsymbol{T}_{p'} \boldsymbol{h}\rangle|.
\end{equation}
Based on results by Alltop in \cite{al80}, Strohmer and Heath showed
in \cite{hest03} that the coherence (\ref{def:coherence1}) of $\BG \boldsymbol{h}^A$
given in (\ref{eq:h1}) satisfies
\begin{equation}\label{coh:h1}
\mu \,=\, \frac{1}{\sqrt{n}}
\end{equation}
for $n$ prime.
This is almost optimal since the general lower bound in \cite{hest03}
for the coherence of frames with $n^2$ elements in $\C^n$  yields
$\mu \geq \frac{1}{\sqrt{n+1}}$.

Unfortunately, the coherence (\ref{def:coherence1}) of $\boldsymbol{h}^A$
applies only for $n$ prime.
For arbitrary $n$ we consider the random window $\boldsymbol{h}^R$.

\begin{theorem}\label{lem:mu_prob} Let $n \in \N$ and
choose a random window $\boldsymbol{h}^R$ with entries
\begin{equation*}
h^R_q \,\defleft\, \frac{1}{\sqrt{n}} \epsilon_q, \quad q=0,\hdots,n{-}1,
\end{equation*}
where the $\epsilon_q$ are independent and uniformly distributed on
the torus $\{z \in \CC, |z|=1\}$. Let $\mu$ be the coherence of the
associated Gabor dictionary (\ref{def:coherence1}),
then for $\alpha > 0$ and $n$ even,
\begin{equation*}
\P\big( \mu \geq \frac{\alpha}{\sqrt{n}}\big) \leq 4 n(n{-}1) e^{-\alpha^2/4},
\end{equation*}
while for $n$ odd,
\begin{equation}\label{eq:p2}
\P\big(\mu \geq \frac{\alpha}{\sqrt{n}}\big) \leq
2n(n{-}1)\left(e^{-\frac{n{-}1}{n}\alpha^2/4} + e^{-\frac{n+1}{n}
\alpha^2/4}\right).
\end{equation}
\end{theorem}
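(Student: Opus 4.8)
The plan is to reduce the coherence to a maximum of only $n(n-1)$ scalar random variables, bound the tail of each one individually, and finish with a union bound; the even/odd dichotomy will emerge from a single second-moment computation. First I would exploit the Heisenberg--Weyl group structure of the time-frequency shifts: writing $(\boldsymbol{M}_{\ell'}\boldsymbol{T}_{p'})^{*}\boldsymbol{M}_\ell\boldsymbol{T}_p = c\,\boldsymbol{M}_{\ell-\ell'}\boldsymbol{T}_{p-p'}$ with $|c|=1$, the quantity in (\ref{def:coherence1}) depends only on the difference $(m,r)=(\ell-\ell',p-p')$, so that $\mu=\max_{(m,r)\neq(0,0)}\big|\langle \boldsymbol{M}_m\boldsymbol{T}_r\boldsymbol{h}^R,\boldsymbol{h}^R\rangle\big|$ with
\[
\langle \boldsymbol{M}_m\boldsymbol{T}_r\boldsymbol{h}^R,\boldsymbol{h}^R\rangle=\tfrac1n\sum_{q=0}^{n-1}e^{2\pi i mq/n}\,\epsilon_{q+r}\,\overline{\epsilon_q},
\]
indices taken mod $n$. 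For $r=0$, $m\neq0$ this reduces to $\tfrac1n\sum_q e^{2\pi i mq/n}|\epsilon_q|^2=\tfrac1n\sum_q e^{2\pi i mq/n}=0$, since $|\epsilon_q|=1$; hence only the $n(n-1)$ pairs with $r\neq0$ contribute. Putting $Z_{m,r}:=\sqrt n\,\langle \boldsymbol{M}_m\boldsymbol{T}_r\boldsymbol{h}^R,\boldsymbol{h}^R\rangle$, it then suffices to establish a per-pair tail bound and sum it: $\P(\mu\geq\alpha/\sqrt n)\leq\sum_{r\neq0,\,m}\P(|Z_{m,r}|\geq\alpha)$.

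Next I would record the two moments that drive everything. Because the $\epsilon_q$ are independent and uniform on the torus, only the diagonal $q=q'$ survives in $\E|Z_{m,r}|^2$, giving $\E|Z_{m,r}|^2=1$; on the other hand $\E Z_{m,r}^2$ is nonzero only when the index identity $2r\equiv0\pmod n$ holds, which is impossible for $n$ odd but occurs at $r=n/2$ when $n$ is even. This single fact is the source of the dichotomy: for $n$ odd (and for $n$ even with $r\neq n/2$) the real and imaginary parts of $Z_{m,r}$ are balanced, $\E[(\operatorname{Re}Z_{m,r})^2]=\E[(\operatorname{Im}Z_{m,r})^2]=\tfrac12$, whereas at $r=n/2$ they become unbalanced and must be treated separately.

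The crux is to turn $Z_{m,r}$ into something amenable to a sub-Gaussian tail estimate despite the fact that the summands $\epsilon_{q+r}\overline{\epsilon_q}$ share variables. The permutation $q\mapsto q+r$ partitions $\{0,\dots,n-1\}$ into $\gcd(r,n)$ cycles of common length $n/\gcd(r,n)$; along each cycle the consecutive ratios $\gamma_j=\epsilon_{q_{j+1}}\overline{\epsilon_{q_j}}$ are independent and uniform on the torus, subject only to the single relation $\prod_j\gamma_j=1$ per cycle, and one has $Z_{m,r}=\tfrac1{\sqrt n}\sum_j c_j\gamma_j$ with unimodular coefficients $c_j$. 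I would then bound $\P(|Z_{m,r}|\geq\alpha)\leq\P(|\operatorname{Re}Z_{m,r}|\geq\alpha/\sqrt2)+\P(|\operatorname{Im}Z_{m,r}|\geq\alpha/\sqrt2)$ and apply Hoeffding's inequality to each part, each being a linear combination of the bounded independent variables $\gamma_j$; this yields the sub-Gaussian factor $e^{-\alpha^2/4}$. Bookkeeping of the one multiplicative constraint per cycle produces the finite-$n$ correction: for $n$ odd the constraint removes one free phase and distributes the remainder unevenly between the two parts as $(n\pm1)/2$, which is what gives the two exponents $\tfrac{n-1}{n}\tfrac{\alpha^2}{4}$ and $\tfrac{n+1}{n}\tfrac{\alpha^2}{4}$ with prefactor $2$, while for $n$ even the balanced split gives the clean $e^{-\alpha^2/4}$ with prefactor $4$.

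I expect the main obstacle to be exactly this concentration step: making the dependence among the phase products harmless, tracking the precise Hoeffding constants so that the stated exponents come out, and verifying that the degenerate even case $r=n/2$ (where the terms pair up into conjugates and the variance of $\operatorname{Re}$ and $\operatorname{Im}$ is unbalanced) still respects the uniform bound $4e^{-\alpha^2/4}$. Once the per-pair estimate is in hand, multiplying by the number $n(n-1)$ of contributing pairs delivers the two claimed inequalities.
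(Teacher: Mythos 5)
Your overall architecture matches the paper's: reduce via the Heisenberg--Weyl relations to the $n(n-1)$ inner products $\langle \boldsymbol{M}_m\boldsymbol{T}_r\boldsymbol{h}^R,\boldsymbol{h}^R\rangle$ with $r\neq 0$, observe that the $r=0$ terms vanish, prove a per-pair sub-Gaussian tail, and finish with a union bound. The gap is in the crux step you yourself flag. Hoeffding's inequality requires independence, and the phase products $\gamma_j$ along a cycle of the permutation $q\mapsto q+r$ are not jointly independent (one multiplicative relation per cycle). Your proposed ``bookkeeping'' of that constraint is not an argument, and the way you describe it is off target: the quantities $(n\pm 1)/2$ in the odd case are not a distribution of phases ``between the real and imaginary parts'' --- both parts involve all $n$ summands. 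What actually produces them (and this is the paper's mechanism) is a partition of the index set $\{0,\dots,n-1\}$ into two subsets $\Lambda^1,\Lambda^2$ on each of which the $\delta_q^{(p,\ell)}$ \emph{are} jointly independent (possible because any proper subset of a cycle's consecutive ratios is jointly independent and uniform); these subsets have sizes $n/2,\,n/2$ for $n$ even and $(n-1)/2,\,(n+1)/2$ for $n$ odd. One then applies the pigeonhole bound $\P(|\sum_{\Lambda^1}|+|\sum_{\Lambda^2}|\geq nt)\leq \P(|\sum_{\Lambda^1}|\geq nt/2)+\P(|\sum_{\Lambda^2}|\geq nt/2)$ together with the \emph{complex} Bernstein inequality $\P(|\sum_{q=1}^{L}\epsilon_q|\geq Lu)\leq 2e^{-Lu^2/2}$ on each half; this is exactly where the factor $4$ and the exponent $\alpha^2/4$ (resp.\ $\tfrac{n\mp1}{n}\alpha^2/4$) come from.

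Moreover, even if you patch the independence issue by adopting such a two-subset partition, your real/imaginary-plus-Hoeffding route does not recover the stated constants: splitting $|Z|\geq\alpha$ into $|\operatorname{Re}Z|,|\operatorname{Im}Z|\geq\alpha/\sqrt2$ and applying real Hoeffding with range $[-1,1]$ already spends a factor of $2$ in the exponent that the complex Bernstein inequality does not, so combining it with the (necessary) index-set split yields a bound of order $8e^{-\alpha^2/8}$ per pair rather than $4e^{-\alpha^2/4}$. Your clean constant $4e^{-\alpha^2/4}$ only appears if you pretend all $n$ ratios are independent, which they are not. Finally, the second-moment observation ($\E Z_{m,r}^2\neq 0$ iff $2r\equiv 0 \bmod n$) is correct but is a red herring: the even/odd dichotomy in the theorem comes from whether $n$ splits into two equal independent halves, not from the unbalanced variance at $r=n/2$; that case is handled uniformly by the partition argument and needs no separate treatment.
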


Up to the constant factor $\alpha$, the coherence
in Theorem~\ref{lem:mu_prob} comes close to
the lower bound $\mu \geq \frac{1}{\sqrt{n+1}}$ with high
probability.  Theorems~\ref{cor1} and~\ref{thm:bp_gabor_nlogn} will follow from
these order ${\cal O}(1/\sqrt{n})$ coherence results in this section
and the Theorems~\ref{thm:coh} and~\ref{thm:stable:coh} of
\cite{doelte06,grva06,tr04,tr06-3} and Theorems~\ref{thm:Tropp}
and~\ref{thm:Tropp2} of Tropp \cite{tr06-2} respectively.

%

{\it Proof of Theorem~\ref{lem:mu_prob}.}
The technical details for $n$ even and odd are slightly different, for
conciseness we only state the proof for $n$ even, and outline the proof
for $n$ odd.

A direct computation shows that
\[
|\langle \boldsymbol{M}_{\ell'} \boldsymbol{T}_{p'} \boldsymbol{h}^R, \boldsymbol{M}_\ell \boldsymbol{T}_p \boldsymbol{h}^R \rangle| = |\langle
\boldsymbol{M}_{\ell - \ell'} \boldsymbol{T}_{p-p'} \boldsymbol{h}^R,\boldsymbol{h}^R\rangle|
\]
and, therefore, it suffices to consider $\langle \boldsymbol{M}_{\ell} \boldsymbol{T}_p \boldsymbol{h}^R,\boldsymbol{h}^R\rangle$,
$\ell,p = 0,\hdots,n{-}1$; furthermore, as $\langle \boldsymbol{M}_\ell
\boldsymbol{h}^R,\boldsymbol{h}^R\rangle =\langle \boldsymbol{M}_\ell {\mathbf 1},|\boldsymbol{h}^R|^2 \rangle= 0$ for $\ell\neq 0$, we consider only the case $p \neq 0$.

Writing $\epsilon_q = e^{2\pi i y_q}$
with $y_q \in [0,1)$ we obtain
\begin{eqnarray*}
  \langle \boldsymbol{M}_\ell \boldsymbol{T}_p \boldsymbol{h}^R,\boldsymbol{h}^R\rangle &=& \frac{1}{n} \sum_{q=0}^{n{-}1}
e^{2\pi i \frac{q \ell}{n}} \epsilon_{q-p} \overline{\epsilon_q}= \frac{1}{n} \sum_{q=0}^{n{-}1} e^{2\pi i \left(y_{q-p} - y_q +
\frac{q\ell}{n} \right)},
\end{eqnarray*}
where $\epsilon_{q-p}= \epsilon_{n+q-p}$ if $q-p < 0$, that is, the
indices are understood modulo $n$. Set
\[
\delta_q^{(p,\ell)} \,\defleft\, e^{2\pi i \left(y_{q-p} - y_q +
\frac{q\ell}{n} \right)},
\]
and note that $\delta_q^{(p,\ell)}$ is uniformly
distributed on the torus $\T$. However, the $\delta_q^{(p,\ell)}$,
$q=1,\hdots,n$, are no longer jointly independent. But nevertheless,
as we demonstrate in the following,
we can split all variables into two subsets of independent variables.

If $p=1$, $p=n{-}1$,
or if neither $p$ nor $n-p$ divide $n$, then the $n/2$ random variables
$\epsilon_0 \overline{\epsilon_p}, \epsilon_{p} \overline{\epsilon_{2p}},
\hdots, \epsilon_{p (n/2-1)} \overline{\epsilon_{p n/2}}$ are jointly
independent, as well as the
remaining $n/2$ variables
$\epsilon_{pn/2} \overline{\epsilon_{p (n/2+1)}},
\hdots,  \epsilon_{p (n{-}1)} \overline{\epsilon_{0}}$.  The indices are again
understood modulo $n$. If $p\geq 2$ or $n-p \geq 2$
divides $n$, then we form the $p$ random vectors
\begin{align}
\boldsymbol{Y}_1 \defleft & (\epsilon_0 \overline{\epsilon_p},  \epsilon_{p} \overline{\epsilon_{2p}},  \hdots,  \epsilon_{n-p} \overline{\epsilon_{0}}),\notag\\
\boldsymbol{Y}_2 \defleft & (\epsilon_1 \overline{\epsilon_{p+1}},  \epsilon_{p+1} \overline{\epsilon_{2p+1}},  \hdots,  \epsilon_{n-p+1} \overline{\epsilon_{1}}),\notag\\
& \vdots\notag \\
\boldsymbol{Y}_p \defleft &(\epsilon_{p-1} \overline{\epsilon_{2p-1}},  \epsilon_{2p-1}
\overline{\epsilon_{3p-1}},  \hdots,  \epsilon_{n{-}1} \overline{\epsilon_{p-1}}).\notag
\end{align}
These vectors are jointly independent. Moreover, $p \leq n/2$ allows partitioning the
entries of a single vector $\boldsymbol{Y}$ into two sets $\Lambda_p^1$ and $\Lambda_p^2$
with $|\Lambda_p^1| , |\Lambda_p^2| \geq 1$ and the elements of each set are jointly independent.
Indeed, this can be seen by forming subsets of two {\em adjacent elements}
of the form
$\{\epsilon_{k+jp} \overline{\epsilon_{k+(j+1)p}},
\epsilon_{k + (j+1)p} \overline{\epsilon_{k+(j+2)p}}\}$
with possibly a remaining single element subset.
Then all subsets are jointly independent and the two elements
inside a subset are independent as well.

Now by forming unions $\cup_{i=1}^p \Lambda_i^1$ and $\cup_{i=1}^p \Lambda_i^2$
we can always partition the index set $\{0,\hdots,n{-}1\}$
into two subsets $\Lambda_1$, $\Lambda_2 \subset \{0,\hdots,n{-}1\}$
with $|\Lambda_1| = |\Lambda_2| = n/2$ such that the random variables
$\{\delta_q^{(p,\ell)}, q \in \Lambda^i\}$ are jointly independent
for both $i=1,2$.

In the following, we will use the complex Bernstein inequality, see
for example~\cite[Proposition 15]{tr06-2} and \cite{pesh95}. It
states that for an independent sequence $\epsilon_q, q=1,\hdots,n$, of
random variables which are uniformly distributed on the torus,
\begin{equation}\label{eq:bernstein}
\P\left(\left|\sum_{q=1}^{n} \epsilon_q \right| \geq n u \right) \,\leq\,
2 e^{- n u^2/2}.
\end{equation}
Using the
pigeonhole principle and the inequality (\ref{eq:bernstein}) we obtain
\begin{eqnarray}
\P\left(|\langle \boldsymbol{M}_\ell \boldsymbol{T}_p \boldsymbol{h}^R,\boldsymbol{h}^R\rangle | \geq t \right) &=&
\P\big(\big|\sum_{q=0}^{n{-}1} \delta_{q}^{(p,\ell)} \big| \geq n t
\big) \notag \\ &\leq& \P\big(\big|\sum_{q \in \Lambda^1}
\delta_{q}^{(p,\ell)}\big| \geq nt/2 \big) +\ \P\big(\big|\sum_{q\in
\Lambda^2} \delta_q^{(p,\ell)}\big| \geq
nt /2 \big)\notag\\
& \leq& 4 \exp(-n t^2/4).\notag
\end{eqnarray}
Forming the union bound over all possible
$(p,\ell) \in \{0,\hdots,n{-}1\}^2 \setminus \{(0,0)\}$ and choosing
$t = \alpha / \sqrt{n}$
yields the statement of Theorem~\ref{lem:mu_prob} for $n$ even.

The proof of Theorem~\ref{lem:mu_prob} for $n$ odd uses essentially the same
technique as for $n$ even, with the difference that
the random variables $\delta_k^{(m,\ell)}$ are grouped
into sets of unequal cardinality,
$|\Lambda^1|= (n{-}1)/2$ and $|\Lambda^2| = (n+1)/2$.
For large $n$ the probability tail bounds are nearly the same
for $n$ even (\ref{eq:p1}) and $n$ odd (\ref{eq:p2}).\hfill $\square$


\subsection{Proof of Theorem~\ref{cor1}}

Part (a) follows directly from Theorem~\ref{thm:coh} and the coherence of $\BG \boldsymbol{h}^A$ (\ref{coh:h1}).

Part (b) follows from Theorem~\ref{thm:coh} and Theorem~\ref{lem:mu_prob}. In fact,
the probability that the condition $\mu < (2k-1)^{-1}$ of Theorem~\ref{thm:coh}
does {\em not} hold for $\BG \boldsymbol{h}^R$ is estimated by
\[
\P(\mu \geq (2k-1)^{-1}) \leq 4n^2
\exp\left(-\frac{n}{4(2k-1)^2}\right).
\]
Requiring that the latter term is less than $e^{-t}$ and
solving for $k$ gives (\ref{cond:k}).\hfill $\square$

\subsection{Proof of Theorem~\ref{thm:bp_gabor_nlogn}}\label{subsec:thm5.2}

Having established 
coherence results for  $\BG \boldsymbol{h}^A$ and $\BG \boldsymbol{h}^R$
in Section
\ref{subsec:gabor_coherence},
Theorem~\ref{thm:bp_gabor_nlogn} follows from
Theorems~\ref{thm:Tropp} and~\ref{thm:Tropp2} of Tropp \cite{tr06-2} as shown below.

(a) Recall from (\ref{coh:h1}) that the coherence for $\BG \boldsymbol{h}^A$
satisfies $\mu = n^{-1/2}$.
Next, observe that $\boldsymbol{h}^A$ unimodular implies that the columns of  $\BG \boldsymbol{h}^A$ form $n$ orthonormal bases, and, hence,  $n=\|(\BG \boldsymbol{h}^A)^\ast\|_{2,2}^2=\|\BG \boldsymbol{h}^A\|_{2,2}^2 $. Plugging this into
condition (\ref{cond:Tropp1}) of Tropp's theorem with $\delta = 1/2$ we require
that
\begin{eqnarray*}
\sqrt{144 s}\, \sqrt{\frac{k \log(k/2+1)}{n}} + \frac{2k}{n} = e^{-1/4}/2.
\end{eqnarray*}
Solving for $s$ yields (\ref{cond:s1}). Applying Theorem \ref{thm:Tropp2},
which requires $s\geq 1$, shows that condition (\ref{cond:Tropp1})
in Theorem~\ref{thm:Tropp} holds for $\boldsymbol{A}=\BG \boldsymbol{h}^A$
and we conclude that $\|\boldsymbol{A}_{\Lambda}^* \boldsymbol{A}_{\Lambda} - \Id\|_{2,2} \leq 1/2$
with probability at least $1- (k/2)^{-s}$.

Now
let $\delta \defleft \|\boldsymbol{A}_\Lambda^* \boldsymbol{A}_\Lambda - \Id\|_{2,2}$.
Then
\begin{align}
&\P(\nrecA) \notag\\
&\leq\, \P(\nrecA |\delta \leq 1/2)\notag  + \P(\delta > 1/2).\notag
\end{align}
Thus by Theorem~\ref{thm:Tropp2} we can lower bound the probability
that recovery is successful by
\[
1-((k/2)^{-s} + 2n^2 \exp(-\frac{n}{8k})).
\]
Furthermore, observe that $2n^2 \exp(-\frac{n}{8k}) \leq \epsilon$
under condition (\ref{kcond:a:1}). 

(b) Let $\mu$ be the coherence associated with the random Gabor
window $\boldsymbol{h}^R$. Setting $\alpha^2 = p \log n$ in
Theorem~\ref{lem:mu_prob} we obtain that the probability
that $\mu$ exceeds $\sqrt{\frac{p\log n}{n}}$ is smaller than
$$4n(n-1) \exp(-\alpha^2/4) \leq 4n^{-p/4+2}\,.$$ Set $\sigma = p/4-2$, i.e.,
$p=4(\sigma + 2)$, and assume for the moment that
$\mu \leq \sqrt{\frac{p \log n }{n}}$. Then condition (\ref{cond:Tropp1})
with $\delta = 1/2$ of Theorem \ref{thm:Tropp2} is satisfied if
\[
\sqrt{144 s}\sqrt{4(\sigma + 2)\frac{k\log n }{n}} + \frac{2k}{n} =
e^{-1/4}/2.
\]
Requiring $s\geq 1$ yields condition (\ref{cond:s2}). Invoking
Theorem \ref{thm:Tropp2} we obtain that
$\|\boldsymbol{A}_{\Lambda}^* \boldsymbol{A}_{\Lambda} - \Id\|_{2,2} \leq 1/2$,
$A = (\BG \boldsymbol{h}^R)$,
with probability at least $1- (k/2)^{-s}$.

Similarly to the proof of part (a), we estimate the probability
of successful recovery by
\begin{align}
& \P(\recR) \notag \\
& \geq 1 -  \Big(\ \P\big(\nrecR|\delta \leq 1/2 \ \& \ \mu^2 \leq \frac{p \log n }{n} \big) \notag \\
&\qquad \qquad\qquad \qquad+\ \P\big(\delta > 1/2 |\mu^2 \leq \frac{p\log n}{n}\big) + \ \P\big(\mu^2 > \frac{p\log n}{n}\big) \ \Big)\,.\notag
\end{align}
By Theorem~\ref{thm:Tropp},
the probability that $\op$ can be reconstructed from
$\op \boldsymbol{h}^R$ by Basis Pursuit (\ref{eq:l1}) exceeds
\[
1- (2n^2 \exp(-\frac{n}{8p\log(n) k}) + (k/2)^{-s} + 4n^{-\sigma}).
\]
Finally, observe that the term $2n^2\exp(-\frac{n}{p\log(n) k})$
is less than $\epsilon$ provided
\[
k \leq \frac{n}{32(\sigma+2)\log(n)\log(2n^2/\epsilon)}.
\]

\subsection{Proof of Corollary~\ref{cor:Fourier}}\label{subsec:cor2.6}

Plancherel's theorem and $\widehat{\boldsymbol{M}_\ell \boldsymbol{T}_p \boldsymbol{h}}=\boldsymbol{T}_\ell \boldsymbol{M}_{n-p}\widehat{\boldsymbol{h}}= \sigma \boldsymbol{M}_{n-p}\boldsymbol{T}_\ell \widehat{\boldsymbol{h}}$ with $|\sigma| = 1$ implies that
the coherence remains the same under Fourier transform
of the window, that is,
\begin{align}
\mu_h & = \sup_{(\ell,p) \neq (\ell',p')}|\langle \boldsymbol{M}_\ell \boldsymbol{T}_p \boldsymbol{h}, \boldsymbol{M}_{\ell'} \boldsymbol{T}_{p'} h\rangle| = \sup_{(\ell,p) \neq (\ell',p')} |\langle \widehat{\boldsymbol{M}_\ell \boldsymbol{T}_p \boldsymbol{h}}, \widehat{\boldsymbol{M}_{\ell'} \boldsymbol{T}_{p'} \boldsymbol{h}}\rangle|\notag\\
&=\, \sup_{(\ell,p) \neq (\ell',p')} |\langle \boldsymbol{M}_{n-p} \boldsymbol{T}_{\ell} \hat{\boldsymbol{h}}, \boldsymbol{M}_{n-p'} \boldsymbol{T}_{\ell'} \hat{\boldsymbol{h}} \rangle| \,=\, \mu_{\hat{h}}.\notag
\end{align}
Since all of the results concerning the dictionary of
time-frequency shift matrices stated above are based on the coherence this proves the
claim.

\section{Multiple test vectors}
\label{sec:severalvectors}

In addition to the goal of recovering the operator
$\op$ from the operator output caused by a single test signal, we may also
consider using two or more test signals $\boldsymbol{h}_1,\hdots,\boldsymbol{h}_r$ to identify $\op$.
In this case, the vector
of concatenated observations $\op \boldsymbol{h}_1,\hdots, \op \boldsymbol{h}_r$ is given as
\[
\left( \begin{matrix} \op \boldsymbol{h}_1 \\ \vdots \\ \op \boldsymbol{h}_r \end{matrix}
\right)
\,=\, \left( \begin{matrix} \BPsi_1 \boldsymbol{h}_1 & \hdots & \BPsi_N \boldsymbol{h}_1 \\
\vdots & & \vdots \\
\BPsi_1 \boldsymbol{h}_r & \hdots & \BPsi_N \boldsymbol{h}_r \end{matrix} \right) \boldsymbol{x} =
\left( \begin{matrix} \BPsi \boldsymbol{h}_1 \\ \vdots \\ \BPsi \boldsymbol{h}_r \end{matrix}\right)\boldsymbol{x},
\]
and our sparse matrix identification task is again reduced to a sparse signal
recovery problem. Although we will not pursue this task in depth here,
we will make some remarks and state extensions of our results to this
more general setting.

Intuitively, using several test vectors instead of a single one should
increase the maximal sparsity $k$ that allows for perfect reconstruction
as more information can be exploited.
However, it is only interesting to consider $r < m$ since
any operator $\boldsymbol{\Gamma} \in \C^{n \times m}$
can be characterized by its action on $m$ basis vectors.
The following lemma on coherence of concatenated measurement matrices
suggests that the maximal recoverable sparsity does not
decrease. Its proof is straightforward and therefore omitted.

\begin{lemma} Let $\boldsymbol{h}_1,\hdots, \boldsymbol{h}_r \in \C^m$
such that the matrices $(\BPsi \boldsymbol{h}_j)$ have
coherence $\mu_j$. Then the coherence $\mu$ of the
normalized concatenated matrix
\begin{align}
\boldsymbol{A}_{\boldsymbol{h}_1,\hdots,\boldsymbol{h}_r}
& = \frac{1}{\sqrt{r}}\left(\begin{array}{c} (\BPsi \boldsymbol{h_1}) \\
(\BPsi \boldsymbol{h}_2) \\  \vdots \\
(\BPsi \boldsymbol{h}_r) \end{array} \right) =\, \frac{1}{\sqrt{r}}
\left( \begin{matrix} \BPsi_1 \boldsymbol{h}_1 & \hdots & \BPsi_N \boldsymbol{h}_1 \\
\vdots & & \vdots \\
\BPsi_1 \boldsymbol{h}_r & \hdots & \BPsi_N \boldsymbol{h}_r \end{matrix} \right)\notag
\end{align}
satisfies $\mu \leq \frac{1}{r}(\mu_1 + \mu_2 + \cdots + \mu_r) \leq \max_{j=1,\hdots,r} \mu_j$.
\end{lemma}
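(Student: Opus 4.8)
The plan is to reduce the estimate to the triangle inequality together with the elementary fact that an arithmetic mean never exceeds a maximum. First I would pin down the normalization. Since each coherence $\mu_j$ is, by definition (\ref{eq:coherence}), computed for the matrix $(\BPsi \boldsymbol{h}_j)$ after its columns have been scaled to unit length, I take $\|\BPsi_i \boldsymbol{h}_j\|_2 = 1$ for all column indices $i$ and all $j$. Writing $\boldsymbol{a}_i$ for the $i$-th column of the concatenated matrix $\boldsymbol{A} := \boldsymbol{A}_{\boldsymbol{h}_1,\ldots,\boldsymbol{h}_r}$, we have $\boldsymbol{a}_i = \frac{1}{\sqrt{r}}\,(\BPsi_i\boldsymbol{h}_1,\ldots,\BPsi_i\boldsymbol{h}_r)^T \in \C^{nr}$, and
\begin{align}
\|\boldsymbol{a}_i\|_2^2 \,=\, \frac{1}{r}\sum_{j=1}^r \|\BPsi_i\boldsymbol{h}_j\|_2^2 \,=\, 1,\notag
\end{align}
so that $\boldsymbol{A}$ again has unit-norm columns and its coherence $\mu$ is the genuine quantity (\ref{eq:coherence}). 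This is precisely what the prefactor $\frac{1}{\sqrt{r}}$ accomplishes; in the time-frequency shift setting it is harmless because $\|\boldsymbol{M}_\ell\boldsymbol{T}_p\boldsymbol{h}_j\|_2 = \|\boldsymbol{h}_j\|_2$ makes all columns within a single block of equal norm.

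Next I would expand the off-diagonal entries of the Gram matrix. Since the inner product on $\C^{nr}$ decomposes over the $r$ blocks, for $i \neq i'$ we have
\begin{align}
\langle \boldsymbol{a}_i, \boldsymbol{a}_{i'}\rangle \,=\, \frac{1}{r}\sum_{j=1}^r \langle \BPsi_i\boldsymbol{h}_j, \BPsi_{i'}\boldsymbol{h}_j\rangle.\notag
\end{align}
By the triangle inequality, and then by bounding each summand through $|\langle \BPsi_i\boldsymbol{h}_j, \BPsi_{i'}\boldsymbol{h}_j\rangle| \leq \mu_j$ — which holds for \emph{every} pair $i \neq i'$ precisely because $\mu_j$ is the maximum of exactly these quantities — I obtain
\begin{align}
|\langle \boldsymbol{a}_i, \boldsymbol{a}_{i'}\rangle| \,\leq\, \frac{1}{r}\sum_{j=1}^r |\langle \BPsi_i\boldsymbol{h}_j, \BPsi_{i'}\boldsymbol{h}_j\rangle| \,\leq\, \frac{1}{r}\sum_{j=1}^r \mu_j.\notag
\end{align}
The right-hand side is independent of the pair $(i,i')$, so taking the maximum over $i \neq i'$ on the left gives $\mu \leq \frac{1}{r}(\mu_1 + \cdots + \mu_r)$. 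Finally $\frac{1}{r}\sum_{j=1}^r \mu_j \leq \max_{j} \mu_j$ records that an average is dominated by the largest of its terms, closing the chain of inequalities.

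The argument has no genuine obstacle, which is why the paper omits it; the only step deserving attention is the bookkeeping in the first paragraph, namely verifying that under the standing convention of unit-norm block columns the factor $\frac{1}{\sqrt{r}}$ produces unit-norm columns for $\boldsymbol{A}$, so that $\mu$ is a bona fide coherence in the sense of (\ref{eq:coherence}) rather than an inner product of unnormalized vectors. Once that normalization is settled, the estimate is a one-line application of the triangle inequality followed by the mean-versus-maximum bound.
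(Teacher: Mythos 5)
Your proof is correct, and since the paper explicitly omits the proof as straightforward, your argument — blockwise decomposition of the inner product, triangle inequality, and the mean-versus-maximum bound, with the normalization check justifying the factor $\frac{1}{\sqrt{r}}$ — is exactly the intended one.
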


A straightforward
extension of the proof of Theorem \ref{lem:mu_prob} yields the following result in the setting of time-frequency shifts and several randomly chosen
$\boldsymbol{h}^R_j$, $j=1,\hdots,r$.

\begin{theorem} Let $n \in \N$ be even and
choose random windows $\boldsymbol{h}^R_j$, $j=1,\hdots,r$,
with entries
\begin{equation*}
(\boldsymbol{h}^R_j)_q \,\defleft\, \frac{1}{\sqrt{n}} \epsilon_{qj}, \quad q=0,\hdots,n{-}1,
\end{equation*}
where the $\epsilon_{qj}$ are independent and uniformly distributed on
the torus $\{z \in \CC, |z|=1\}$. Let $\mu$ be the coherence of the
concatenated matrix
\[
\frac{1}{\sqrt{r}}\left(\begin{matrix} (\BG \boldsymbol{h}^R_1) \\ \vdots \\ (\BG \boldsymbol{h}^R_r) \end{matrix}\right)
\]
where $\BG$ is defined in (\ref{eq:bg}).
Then for $\alpha > 0$
\begin{equation}\label{eq:p1}
\P\big( \mu \geq \frac{\alpha}{\sqrt{rn}}\big) \leq 4 n(n{-}1) e^{-\alpha^2/4}.
\end{equation}
\end{theorem}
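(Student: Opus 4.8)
The plan is to run the proof of Theorem~\ref{lem:mu_prob} (the $n$ even case) essentially verbatim, the only new feature being that the coherence of the concatenated dictionary is an \emph{average} of single-window inner products over the $r$ windows, so that the effective number of unimodular summands grows from $n$ to $rn$. First I would note that the column of the concatenated matrix indexed by $(\ell,p)$ has norm one, since $\|\boldsymbol{M}_\ell \boldsymbol{T}_p \boldsymbol{h}^R_j\|_2 = 1$ for every $j$, and that the modulus of the inner product of two distinct columns indexed by $(\ell,p)$ and $(\ell',p')$ is
\[
\Big|\tfrac{1}{r}\sum_{j=1}^r \langle \boldsymbol{M}_{\ell'} \boldsymbol{T}_{p'} \boldsymbol{h}^R_j, \boldsymbol{M}_\ell \boldsymbol{T}_p \boldsymbol{h}^R_j\rangle\Big| = \Big|\tfrac{1}{r}\sum_{j=1}^r \langle \boldsymbol{M}_{\ell-\ell'} \boldsymbol{T}_{p-p'} \boldsymbol{h}^R_j, \boldsymbol{h}^R_j\rangle\Big|.
\]
The shift-invariance identity from Theorem~\ref{lem:mu_prob} is applied termwise here; crucially, the unimodular phase produced when commuting $(\boldsymbol{M}_\ell\boldsymbol{T}_p)^\ast \boldsymbol{M}_{\ell'}\boldsymbol{T}_{p'}$ into $\boldsymbol{M}_{\ell-\ell'}\boldsymbol{T}_{p-p'}$ depends only on $(\ell,p,\ell',p')$ and not on $j$, so it factors out of the sum. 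Since $\langle \boldsymbol{M}_\ell \boldsymbol{h}^R_j,\boldsymbol{h}^R_j\rangle = \langle \boldsymbol{M}_\ell \mathbf{1}, |\boldsymbol{h}^R_j|^2\rangle = 0$ for $\ell \neq 0$, the term again vanishes whenever $p=0$, leaving precisely the $n(n-1)$ pairs $(p,\ell)$ with $p \neq 0$ to control.

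Next I would write $\epsilon_{qj} = e^{2\pi i y_{qj}}$, so that for $p \neq 0$ the quantity of interest is the normalized sum of $rn$ unimodular terms
\[
\tfrac{1}{r}\sum_{j=1}^r \langle \boldsymbol{M}_\ell \boldsymbol{T}_p \boldsymbol{h}^R_j,\boldsymbol{h}^R_j\rangle = \tfrac{1}{rn} \sum_{j=1}^r \sum_{q=0}^{n-1} \delta_{q,j}^{(p,\ell)}, \qquad \delta_{q,j}^{(p,\ell)} = e^{2\pi i\left(y_{q-p,j} - y_{q,j} + \frac{q\ell}{n}\right)}.
\]
For each fixed $j$ the $n$ variables $\delta_{q,j}^{(p,\ell)}$ split, exactly as in the proof of Theorem~\ref{lem:mu_prob}, into two subsets $\Lambda^1_j, \Lambda^2_j$ of size $n/2$ on which they are jointly independent. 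Because the windows $\boldsymbol{h}^R_j$ are mutually independent, the unions $\Lambda^1 = \bigcup_{j} \Lambda^1_j$ and $\Lambda^2 = \bigcup_{j} \Lambda^2_j$ each consist of $rn/2$ jointly independent, torus-uniform variables.

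Finally, applying the complex Bernstein inequality (\ref{eq:bernstein}) with $rn/2$ terms to each half, and splitting the event $\{|\tfrac{1}{rn}\sum \delta| \geq t\}$ across the two halves by the triangle inequality, I expect
\[
\P\Big(\big|\tfrac{1}{r}\textstyle\sum_{j=1}^r \langle \boldsymbol{M}_\ell \boldsymbol{T}_p \boldsymbol{h}^R_j,\boldsymbol{h}^R_j\rangle\big| \geq t\Big) \leq 4\exp(-rnt^2/4).
\]
A union bound over the $n(n-1)$ relevant pairs and the substitution $t = \alpha/\sqrt{rn}$, which makes $rnt^2 = \alpha^2$, then yields the claimed bound $4n(n-1)e^{-\alpha^2/4}$. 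The one place that genuinely requires care — and the sole real obstacle — is the independence bookkeeping in the splitting step: one must verify that combining the per-window partitions of Theorem~\ref{lem:mu_prob} with cross-window independence really produces two \emph{globally} independent halves of size $rn/2$. Once that is granted, the argument is just the single-window proof with $n$ replaced by $rn$ in the exponent, which is why the threshold carries $\sqrt{rn}$ while the combinatorial prefactor $n(n-1)$ counting the pairs $(p,\ell)$ with $p \neq 0$ is unchanged.
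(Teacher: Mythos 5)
Your proposal is correct and is precisely the argument the paper intends: the paper states this result only as ``a straightforward extension of the proof of Theorem~\ref{lem:mu_prob},'' and you carry out exactly that extension, with the key bookkeeping (the $j$-independent commutation phase factoring out of the sum, and the merging of the per-window partitions into two globally independent halves of size $rn/2$) handled correctly so that Bernstein's inequality and the union bound over the $n(n-1)$ pairs with $p\neq 0$ give the stated bound.
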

Similarly as in Theorem~\ref{cor1}(b) we deduce that the condition
\[
k \leq \frac{1}{4} \sqrt{\frac{rn}{2 \log n + \log 4 + t}}
\]
implies that Basis Pursuit (or Orthogonal Matching Pursuit) recovers
all $k$-sparse
$\boldsymbol{\Gamma}$ from $\boldsymbol{\Gamma} \boldsymbol{h}_1^R, \hdots, \boldsymbol{\Gamma} \boldsymbol{h}_r^R$ with probability at least
$1-e^{-t}$.
Hence, the maximal provable sparsity increases at least by a factor of $\sqrt{r}$.

Of course, we may as well apply Tropp's result based on random support sets
and phases to arrive at a statement analogous to Theorem
\ref{thm:bp_gabor_nlogn}.

\begin{theorem}\label{thm:bp_gabor_nlognMULT}
Let $n$ be even and $k \geq 3$ and let $\Lambda$ be chosen uniformly at random
among all subsets of $\{0,\hdots,n{-}1 \}^2$ of cardinality $k$. Suppose
further that $\boldsymbol{x} \in \C^n$ has support $\Lambda$ with random phases
$(\sgn(x_{\ell p}))_{(\ell,p)\in \Lambda}$ that are independent and
uniformly distributed on the torus $\{z, |z|=1\}$. Let
\[ \op =
\sum_{(\ell,p) \in \Lambda} x_{\ell p} \boldsymbol{M}_\ell \boldsymbol{T}_p.
\]
Choose $r$ independent random
windows $\boldsymbol{h}^R_1,\hdots,\boldsymbol{h}^R_r$ according to
(\ref{eq:h2}).
Assume
\begin{equation*}
k \leq \frac{rn}{32(\sigma + 2) \log n \log(2n^2/\epsilon)}
\end{equation*}
for some $\sigma > 0$ and
\begin{eqnarray}
s &:= & \frac{1}{576(\sigma + 2)} \left(e^{-1/4}/2 - \frac{2k}{n}\right)^2  \cdot \ \frac{rn}{k \log(k/2+1)} \geq 1\label{cond:s2}\,.
\end{eqnarray}
Then with probability at least
\[
1 - (\epsilon + 4n^{-\sigma}  + (k/2)^{-s})
\]
Basis Pursuit (\ref{eq:l1}) recovers $\op$ from $\op \boldsymbol{h}^R_1,\hdots,\op \boldsymbol{h}^R_r$.
\end{theorem}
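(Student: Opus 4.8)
The plan is to run the argument of Theorem~\ref{thm:bp_gabor_nlogn}(b) essentially verbatim, replacing the single-window coherence estimate of Theorem~\ref{lem:mu_prob} by the concatenated bound~(\ref{eq:p1}) and tracking the factor $r$ through every place where the coherence enters. After the reduction in Section~\ref{sec:severalvectors}, identifying $\op$ from $\op \boldsymbol{h}^R_1,\hdots,\op \boldsymbol{h}^R_r$ is the sparse recovery problem $\boldsymbol{b}=\boldsymbol{A}\boldsymbol{x}$ for the stacked matrix
\[
\boldsymbol{A} = \frac{1}{\sqrt{r}}\left(\begin{matrix} (\BG \boldsymbol{h}^R_1) \\ \vdots \\ (\BG \boldsymbol{h}^R_r) \end{matrix}\right),
\]
where the normalization $1/\sqrt{r}$ makes the columns unit norm: the $(\ell,p)$ column has squared norm $\frac{1}{r}\sum_{j=1}^{r}\|\boldsymbol{M}_\ell \boldsymbol{T}_p \boldsymbol{h}^R_j\|_2^2 = \frac{1}{r}\sum_{j=1}^{r}\|\boldsymbol{h}^R_j\|_2^2 = 1$, using that each $\boldsymbol{h}^R_j$ is unimodular and $\boldsymbol{M}_\ell,\boldsymbol{T}_p$ are unitary. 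Since Basis Pursuit is invariant under this global scaling, $\boldsymbol{A}$ is a legitimate unit-norm dictionary with $N=n^2$ columns, so Theorems~\ref{thm:Tropp} and~\ref{thm:Tropp2} of Tropp apply.

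The only genuinely new estimate is control of the operator norm $\|\boldsymbol{A}\|_{2,2}$. Because $\boldsymbol{h}^R_j$ is unimodular the columns of $\BG \boldsymbol{h}^R_j$ form $n$ orthonormal bases, whence $\|\BG \boldsymbol{h}^R_j\|_{2,2}^2 = n$ for each $j$. Writing $\boldsymbol{A}^\ast\boldsymbol{A} = \frac{1}{r}\sum_{j=1}^{r}(\BG \boldsymbol{h}^R_j)^\ast(\BG \boldsymbol{h}^R_j)$ and applying the triangle inequality for the spectral norm gives $\|\boldsymbol{A}\|_{2,2}^2 = \|\boldsymbol{A}^\ast\boldsymbol{A}\|_{2,2} \leq \frac{1}{r}\sum_{j=1}^{r} n = n$. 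Hence $\frac{2k}{N}\|\boldsymbol{A}\|_{2,2}^2 \leq \frac{2k}{n}$, which is exactly why the $\frac{2k}{n}$ term in~(\ref{cond:s2}) is unchanged from the single-vector case, while the $\sqrt{r}$ gain lives entirely in the coherence.

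Feeding this into Tropp's machinery, I would set $\alpha^2 = p\log n$ with $p = 4(\sigma+2)$ in~(\ref{eq:p1}) to obtain $\P\big(\mu^2 > \tfrac{p\log n}{rn}\big) \leq 4n(n{-}1)e^{-\alpha^2/4} \leq 4n^{-\sigma}$. On the complementary event condition~(\ref{cond:Tropp1}) with $\delta=1/2$ reads
\[
\sqrt{144\,s}\,\sqrt{4(\sigma{+}2)\tfrac{k\log n}{rn}} + \tfrac{2k}{n} = e^{-1/4}/2,
\]
and solving for $s$ while imposing $s\geq 1$ reproduces the defining quantity~(\ref{cond:s2}); Theorem~\ref{thm:Tropp} then yields $\|\boldsymbol{A}_\Lambda^\ast \boldsymbol{A}_\Lambda - \Id\|_{2,2} \leq 1/2$ with probability at least $1-(k/2)^{-s}$. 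Given this near-isometry, Theorem~\ref{thm:Tropp2} and the random-phase model on $\boldsymbol{x}$ deliver recovery by Basis Pursuit with conditional probability at least $1 - 2Ne^{-1/(8\mu^2 k)} \geq 1 - 2n^2\exp\big(-\tfrac{rn}{8pk\log n}\big)$.

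Finally I would assemble the three failure modes exactly as in part~(b), by the same conditioning on $\{\mu^2\leq \tfrac{p\log n}{rn}\}$ and $\{\delta\leq 1/2\}$: the coherence event contributes $\leq 4n^{-\sigma}$, the isometry event $\leq (k/2)^{-s}$, and the recovery event $\leq 2n^2\exp(-\tfrac{rn}{8pk\log n})$. With $p=4(\sigma+2)$ the last term is at most $\epsilon$ precisely when $k \leq \frac{rn}{32(\sigma+2)\log n\,\log(2n^2/\epsilon)}$, the stated sparsity hypothesis, and a union bound then gives total failure probability $\epsilon + 4n^{-\sigma} + (k/2)^{-s}$. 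The main obstacle is the operator-norm step of the second paragraph: once $\|\boldsymbol{A}\|_{2,2}^2\leq n$ is secured and the factor-$r$ improvement of the coherence is in hand, the rest is a transcription of the single-vector proof with $n$ replaced by $rn$ inside the coherence.
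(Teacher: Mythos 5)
Your proposal is correct and follows exactly the route the paper intends: the paper itself offers only the one-line remark that Theorem~\ref{thm:bp_gabor_nlognMULT} follows by applying Tropp's results as in Theorem~\ref{thm:bp_gabor_nlogn}(b) with the concatenated coherence bound~(\ref{eq:p1}), and you have simply filled in that transcription, including the operator-norm bound $\|\boldsymbol{A}\|_{2,2}^2\leq n$ for the stacked matrix that the paper leaves implicit. The only caveat is one you inherit from the paper rather than introduce: solving Tropp's condition literally produces an $s$ with an extra $\log n$ in the denominator relative to the stated~(\ref{cond:s2}), an inconsistency already present in the single-window version.
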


Roughly speaking, with the chosen probabilistic model on the
sparse coefficient vector $\boldsymbol{x}$, the provable maximal sparsity $k$ that allows for recovery, increases by a factor of $r$ when taking $r$ test vectors instead of only one. This fact is illustrated in Figure~\ref{fig:logistic2} in Section~\ref{sec:numerics}.

\section{Numerical results}\label{sec:numerics}

Theorem~\ref{thm:bp_gabor_nlogn} can be tested empirically for
various values of $n$ by trying a number of sparsity levels $k$ and
recording the fraction of times (\ref{eq:l1}) recovers
the true $k$-sparse coefficient vector $\boldsymbol{x}$.

But before doing so, we illustrate in Figure~\ref{fig:example} the recovery method for matrices which have a sparse representation in the dictionary of time--frequency shift matrices as considered in Theorem~\ref{thm:bp_gabor_nlogn}. A $7$-sparse coefficient vector $\boldsymbol{x}$ in the time-frequency plane is chosen and reconstructed
from $\boldsymbol{\Gamma} \boldsymbol{h}^A = \sum_{\ell,p} x_{\ell p} \boldsymbol{M}_\ell \boldsymbol{T}_p \boldsymbol{h}^A$ by Basis Pursuit. As comparison, $\boldsymbol{x}$ is reconstructed by a traditional reconstruction by $\ell_2$-minimization,
\begin{equation}
 \min \|\boldsymbol{x}\|_2 \text{ subject to } (\boldsymbol{\Psi} \boldsymbol{h}^A) x = \boldsymbol{\Gamma} \boldsymbol{h}^A\,.\label{eq:l2minimization}
\end{equation}

\begin{figure}
\begin{center}
\begin{minipage}{6cm}\begin{center}
\includegraphics[width=2.5in]{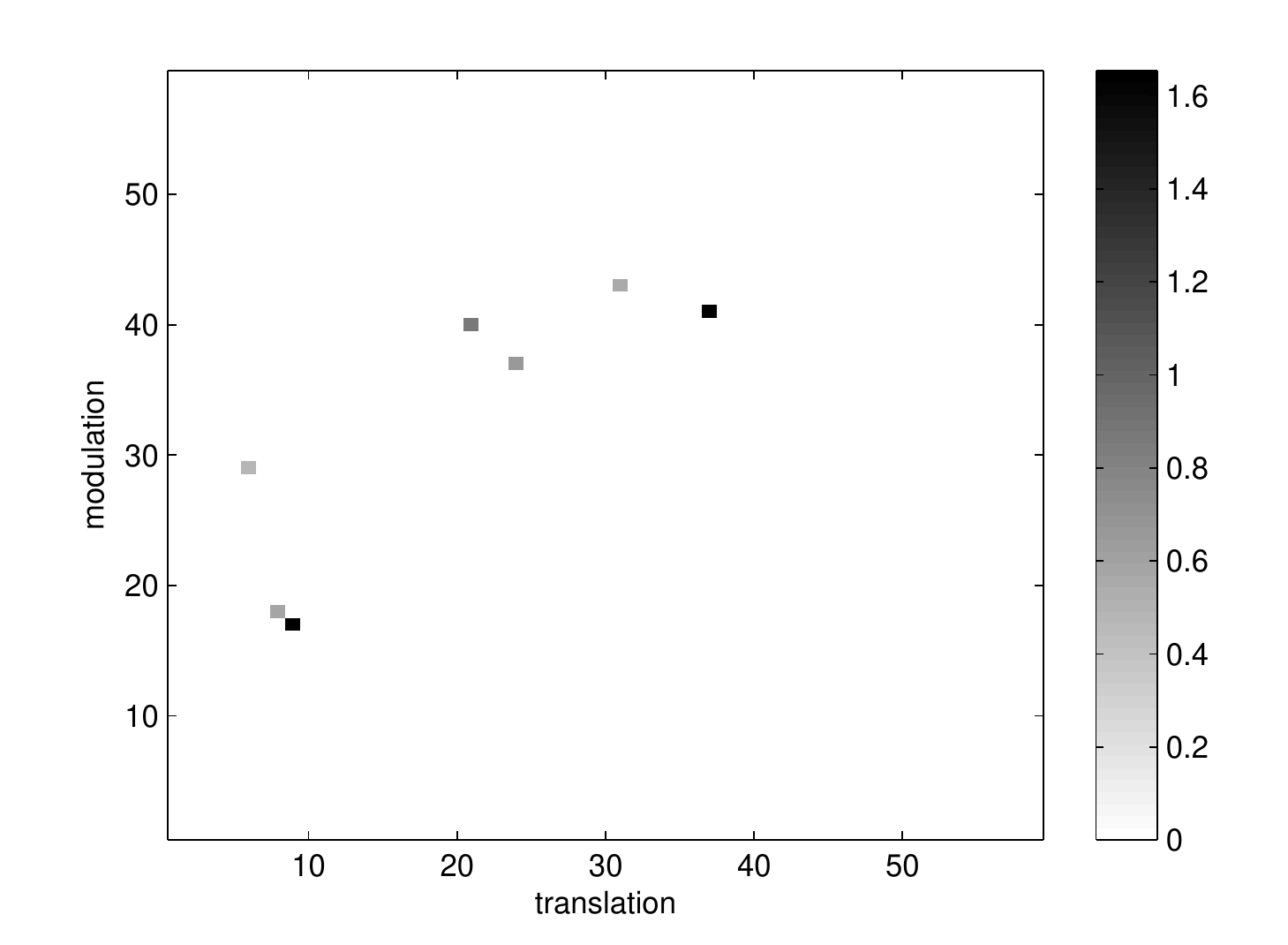}

(a)\end{center}
\end{minipage}\begin{minipage}{6cm}\begin{center}

\includegraphics[width=2.5in]{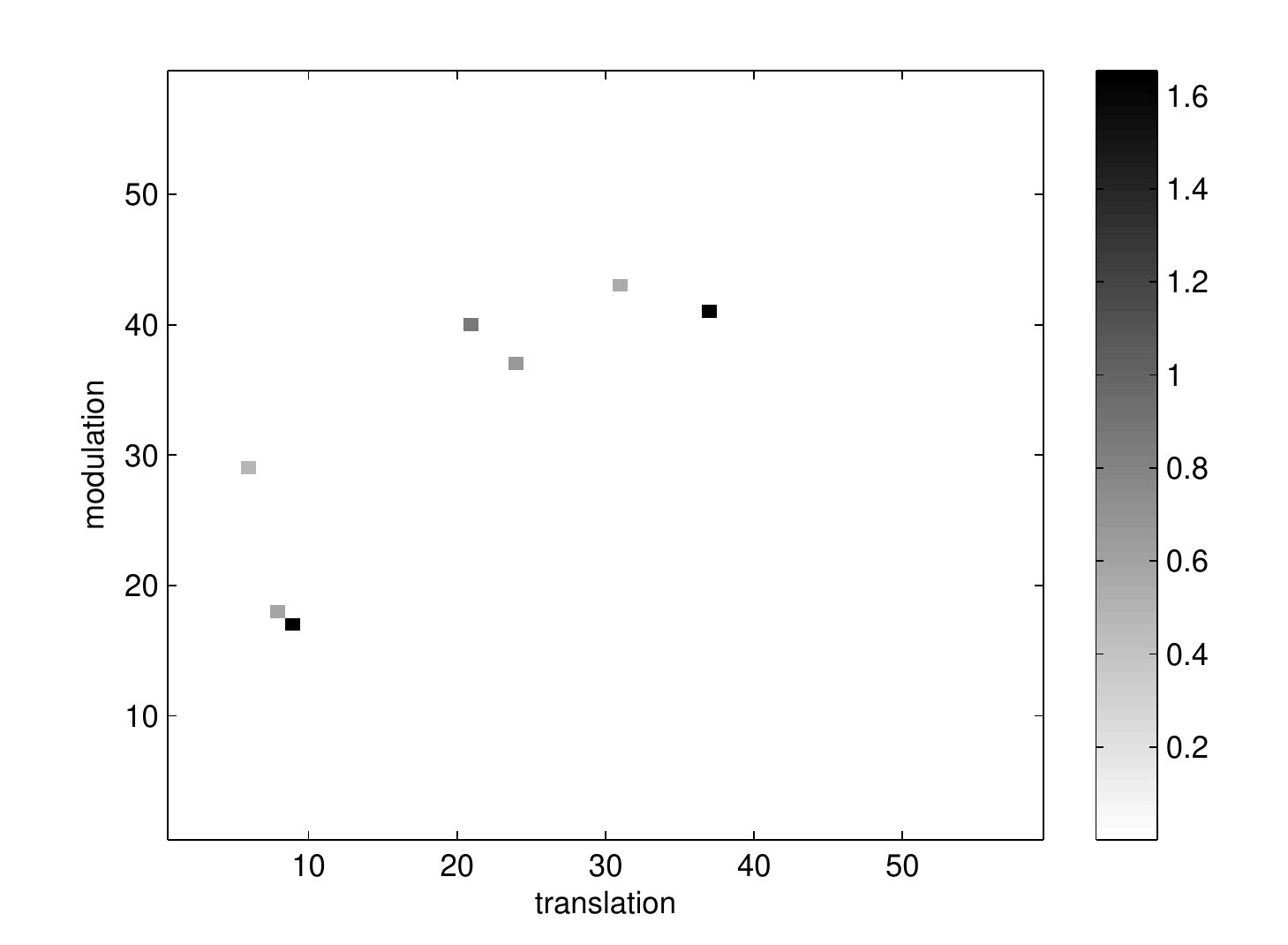}

(b)
\end{center}
\end{minipage}\begin{minipage}{6cm}\begin{center}
\includegraphics[width=2.5in]{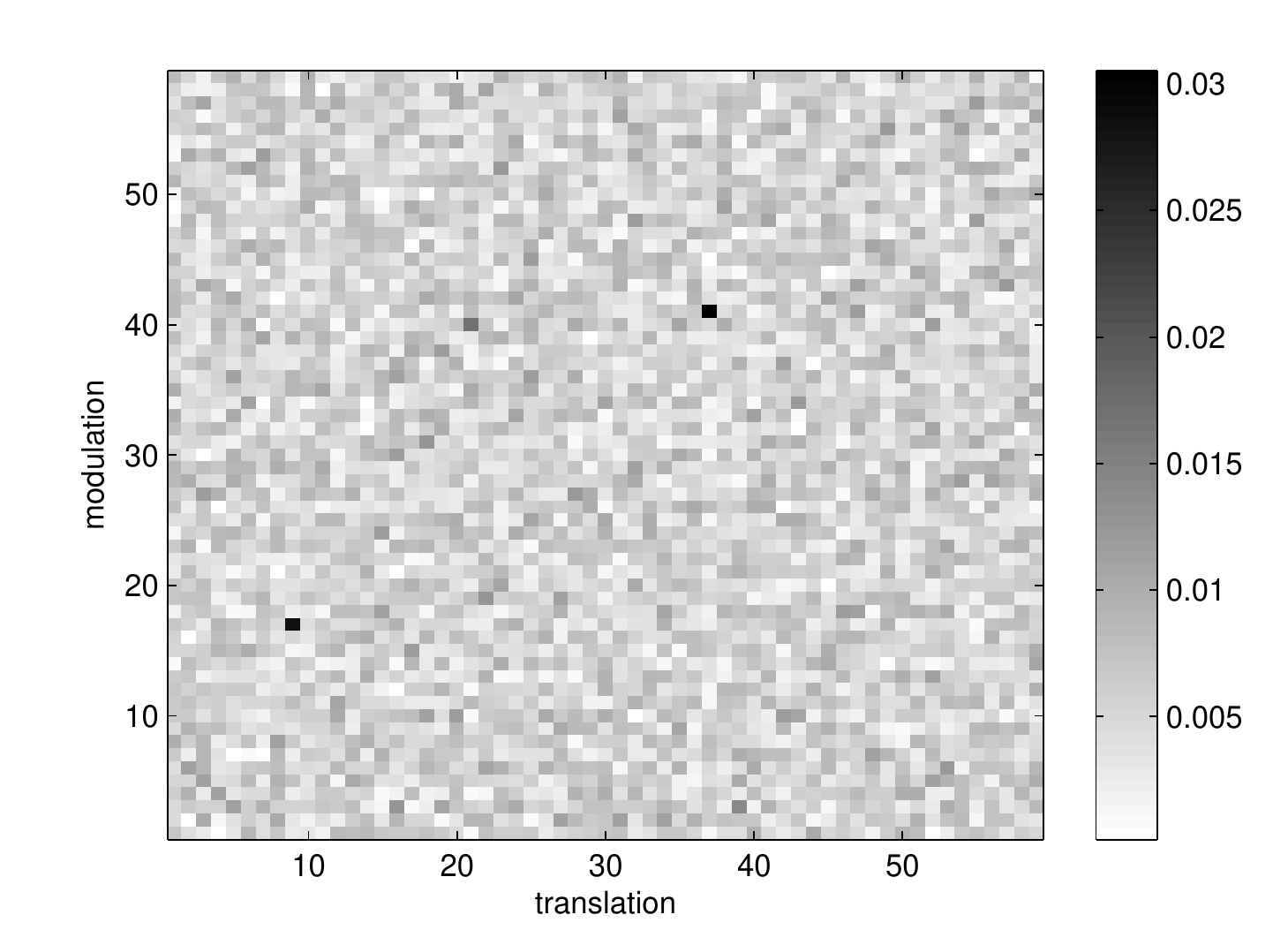}

(c)\end{center}
\end{minipage}
\end{center}
\caption{(a) Original $7$-sparse coefficient vector ($n=59$) in the time-frequency plane.
(b) Reconstruction by Basis Pursuit using the Alltop window $\boldsymbol{h}^A$.
(c) For comparison, the reconstruction by traditional $\ell_2$-minimization \eqref{eq:l2minimization}.}
\label{fig:example}
\end{figure}

For the Alltop window $\boldsymbol{h}^A$ in (\ref{eq:h1}) we consider the
values of $n$ prime from 11 to 59, for the random window $\boldsymbol{h}^R$ in
equation (\ref{eq:h2}) we consider the values of $n$ prime from 11
to 59 as well as $n=10+4j$ for $j=0,1,\ldots, 12$. Each empirical
test consists of generating a random $k$-sparse $x\in\CC^{n^2}$ with
non-zero entries $x_q=r_q\exp(2\pi i \theta_q)$,
with $r_q$ drawn independently from the Gaussian $N(0,1)$ distribution, and
$\theta_q$ drawn independently and uniformly from $[0,1)$.

For each value of $n$, 1000 tests are computed per value of $k=1,2,\ldots,n{-}1$. A test is considered successful if Basis Pursuit (\ref{eq:l1}) recovers all components of the coefficient vector $\boldsymbol{x}$ with $10^{-10}$ error tolerance.  The successful recovery of $\boldsymbol{x}$, and, hence, of $\op$ from $\op \boldsymbol{h}^A$ or $\op \boldsymbol{h}^R$ is recorded in $Y_k^n$ as a 1, and failure to recover as a 0.
 Following the empirical examination of phase transitions in
\cite{dots06-1}, we approximate the observed probability distribution
by fitting the mean response of $Y_k^n$ using the logistic
regression model, \cite{hatifr01},
\begin{equation}\label{eq:logistic}
E(Y_k^{n}) = \frac{\exp (\beta_0(n) + \beta_1(n) k)}{1+\exp (\beta_0(n) + \beta_1(n) k)}.
\end{equation}

For illustration purposes, the fitted response for windows $\boldsymbol{h^A}$
with $n=43$ and $\boldsymbol{h^R}$ with $n=30$ is shown in
Figure~\ref{fig:logistic} along with the mean response of $Y_k^n$.

\begin{figure}
\begin{center}
\begin{tabular}{c}
\includegraphics[width=2.5in]{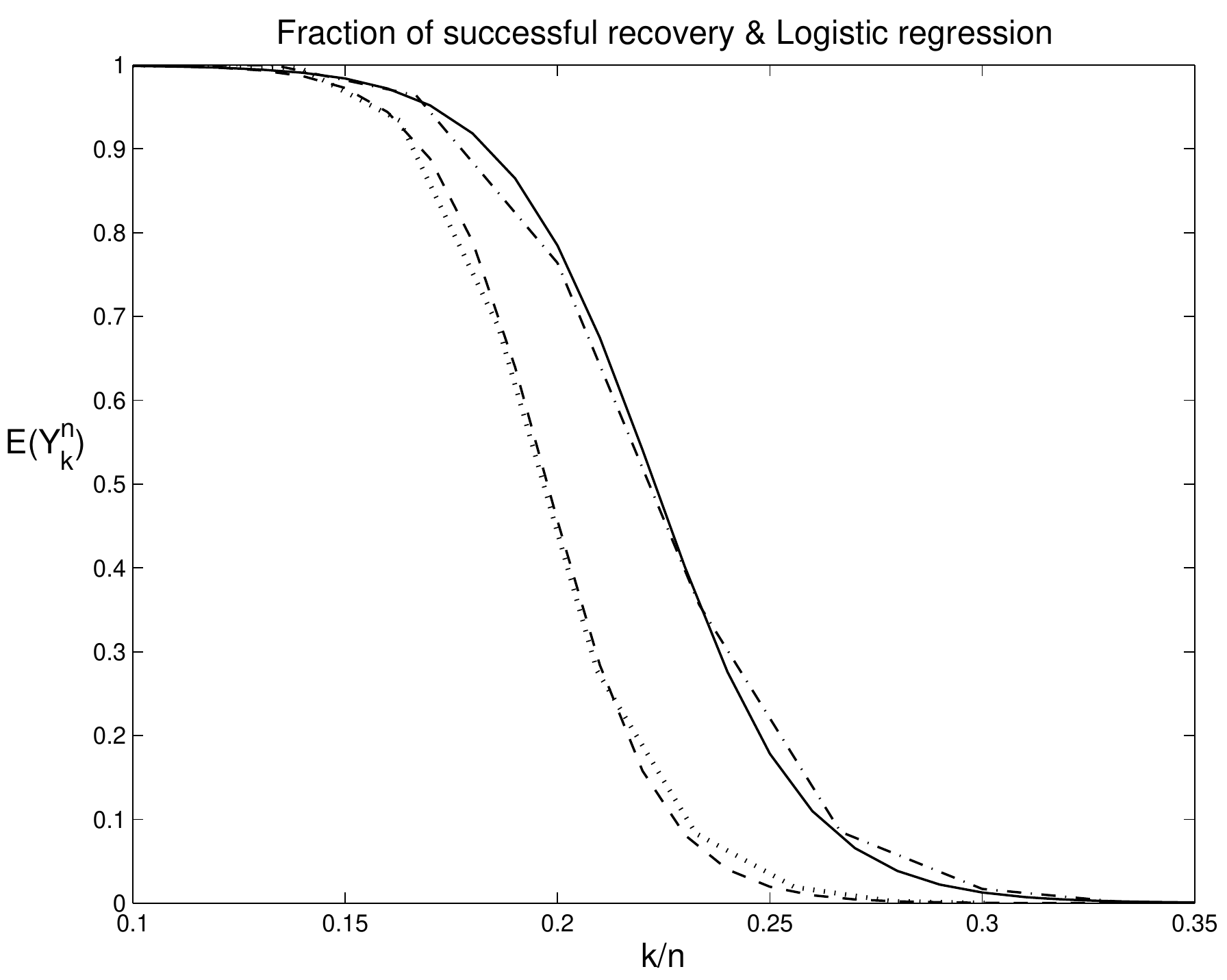}
\end{tabular}
\end{center}
\caption{Empirical verification of Theorem~\ref{thm:bp_gabor_nlogn}
 without noise.
For the random window $\boldsymbol{h}^R$ with $n=30$
the mean response of $Y_k^n$ (dash-dot) and fitted logistic
regression model $E(Y^n_k)$, (solid), plotted against the
fractional sparsity $k/n$.
For the Alltop window $\boldsymbol{h}^A$ with $n=43$
the mean response of $Y_k^n$ (dot) and fitted logistic
regression model $E(Y^n_k)$, (dash), plotted against the
fractional sparsity $k/n$.}\label{fig:logistic}
\end{figure}

The phase transition behaviors are often observed through
the fractional sparsity ratio $k/n$, and the matrix so-called undersampling
rate $n/N$, here $1/n$ for $\BG \boldsymbol{h}^A$ and $\BG \boldsymbol{h}^R$ \cite{dota06}.  Contours of the fitted logistic
regression models for time-frequency shift dictionaries with identifiers
$\boldsymbol{h}^A$ and $\boldsymbol{h}^R$ are shown in Figure
~\ref{fig:contour} (a) and (b) respectively.  To facilitate a
quantitative inspection of the contours in Figure~\ref{fig:contour} and
the theoretical results of \cite{dota06} we overlay the
contours in Figure~\ref{fig:contour} with the level curve for $93\%$ success
rate (dash) and $1/(2\log n)$ (solid).  The curve $1/(2\log n)$ is known to be
the threshold for overwhelming
probability of successful recovery in the case of Gaussian random
matrices for large $n$ \cite{dota06}.
It is observed in Figure~\ref{fig:contour}
that the curve $1/(2\log n)$ remains below the $93\%$
success rate level curve, indicating consistence of the empirical
results with the phase transition $1/(2\log n)$ conjectured for the
class of time-frequency shift matrices applied to identifiers
$\boldsymbol{h}^A$ and $\boldsymbol{h}^R$.
Moreover, the curve $1/(2\log n)$ increasingly falls below the
$93\%$ success rate level curve as $n$ increases, indicating improved
agreement in the large $n$ limit.
Note that this conjectured
phase transition $1/(2\log n)$ is larger than that proven in the main Theorem
~\ref{thm:bp_gabor_nlogn}, both in order (as $u=0$ here), as well as in
the constant.

\begin{figure}
\begin{center}\begin{minipage}{6cm} \begin{center}
  \includegraphics[width=2.5in]{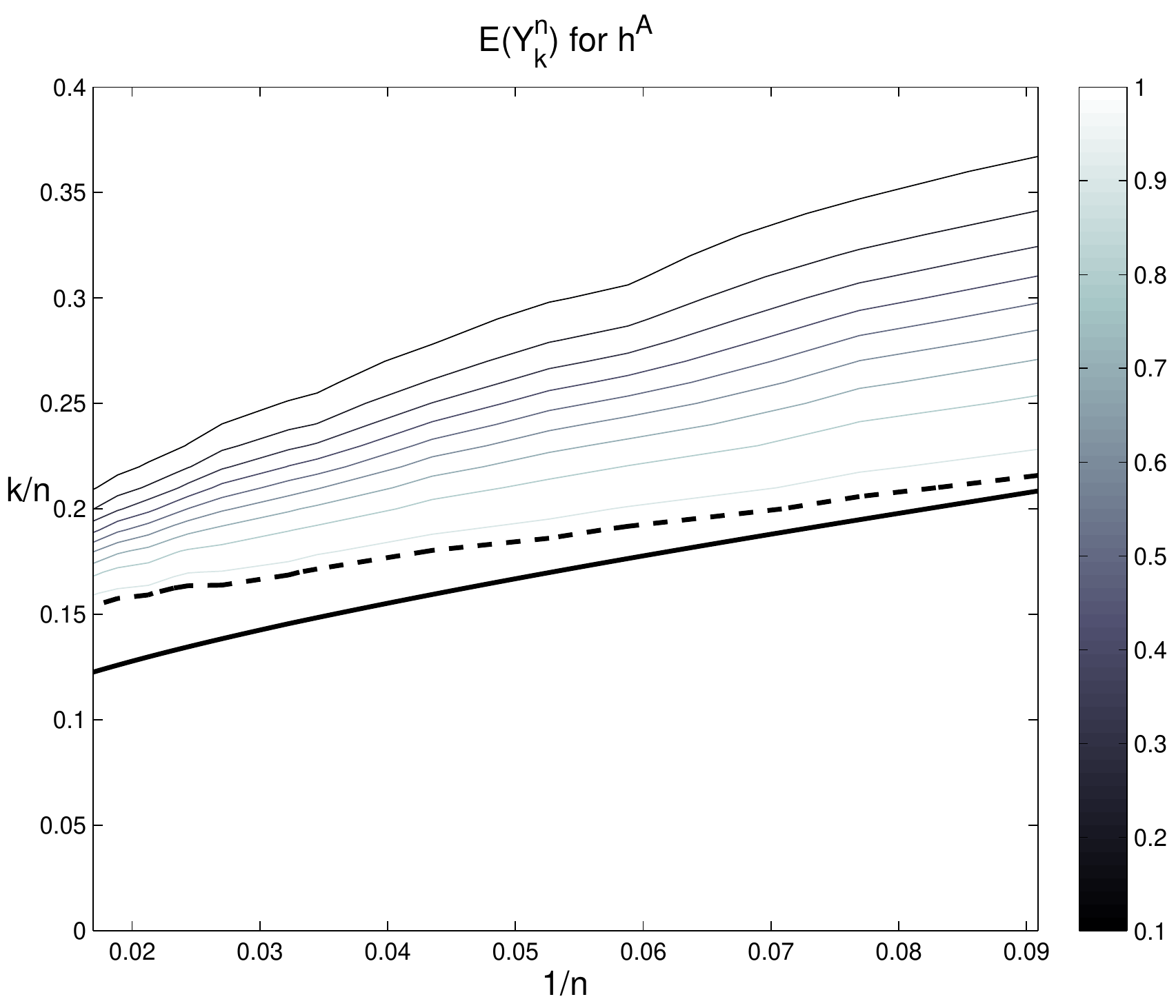}

(a)
\end{center}
\end{minipage}\qquad
\begin{minipage}{6cm} \begin{center}
\includegraphics[width=2.5in]{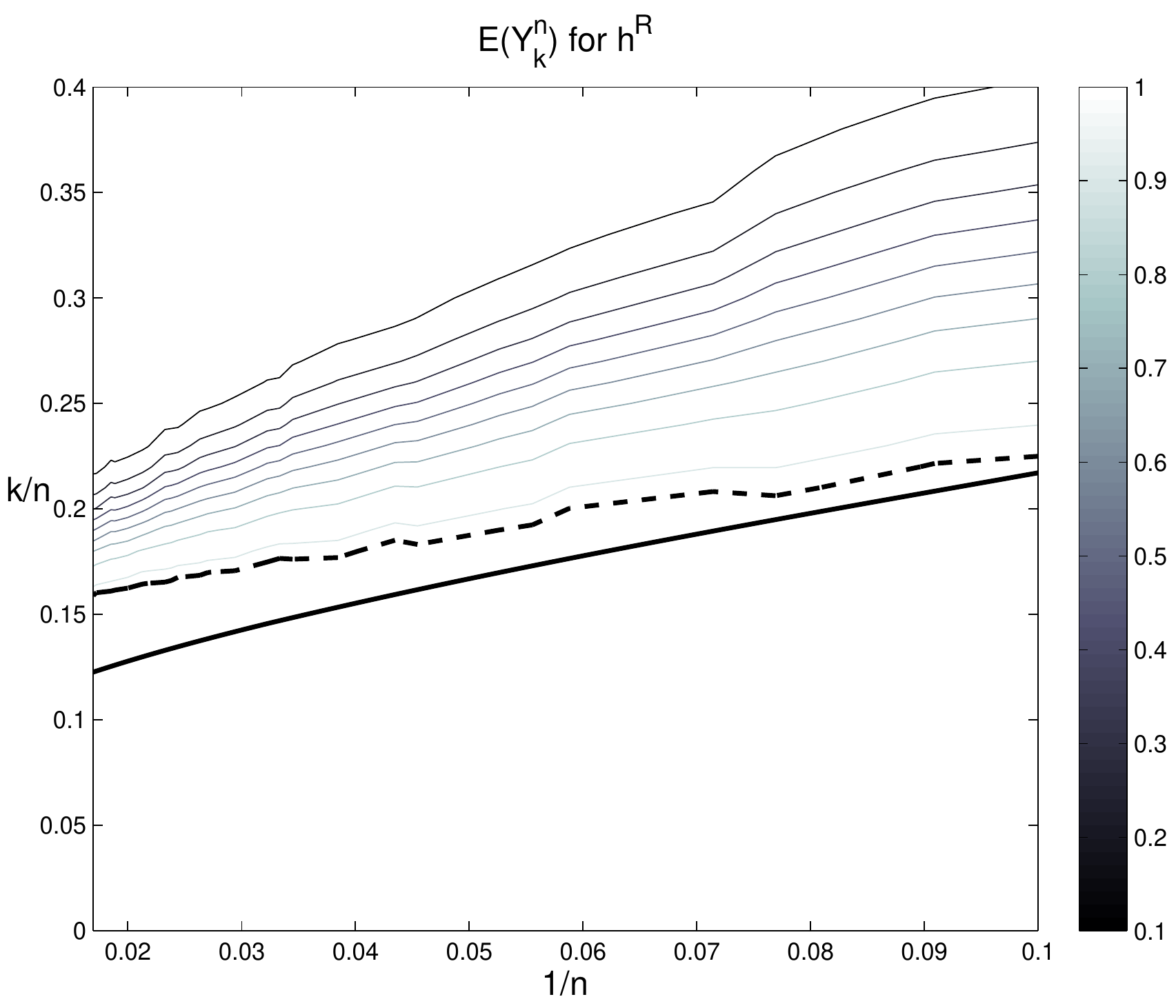}

(b)
\end{center}
\end{minipage}

\end{center}
\caption{Empirical verification of Theorem~\ref{thm:bp_gabor_nlogn}
for $\boldsymbol{h}^A$ (a) and $\boldsymbol{h}^R$ (b) without noise.
Contours of the fitted logistic regression model (gray), the $93\%$
success rate contour (dashed), and $1/(2\log n)$ (solid).
Figure~\ref{fig:logistic} shows vertical slices
for $1/43$ (a) and $1/30$ (b).}
\label{fig:contour}
\end{figure}

As stated earlier, in practice the measurements $\op \boldsymbol{h}$ are observed
with noise and although $\op$ can be well approximated by a $k$-sparse
representation, it is rarely strictly $k$-sparse.  For both of these
reasons, the recovery algorithm (\ref{eq:l1}) is not often used in practice,
rather (\ref{eq:l1noise}) is used to allow for an inexact fit of the
measurements.

In Figure~\ref{fig:contour_noise} we empirically test Theorem~\ref{thm:bp_gabor_nlogn} using
(\ref{eq:l1noise}) rather than (\ref{eq:l1}) for the reconstruction
algorithm. We choose the same values of $k$ and $n$, and the same number of tests were performed as
for Figure~\ref{fig:contour}. The non-zero entries in $\boldsymbol{x}$ are also selected from the same
distribution as was used to generate Figure~\ref{fig:contour}. Additive noise is simulated at a level of 25 dB signal to noise ratio; that is, $\boldsymbol{\eta}$
is added to $\op \boldsymbol{h}$ with the
entries in $\boldsymbol{\eta}$ drawn independently from the Gaussian $N(0,1)$
and $\boldsymbol{\eta}$ is normalized to
$\|\boldsymbol{\eta}\|_2=\|\op \boldsymbol{h}\|_2\cdot 10^{-5/4}$.

\begin{figure}
\begin{center}\begin{minipage}{6cm} \begin{center}
  \includegraphics[width=2.5in]{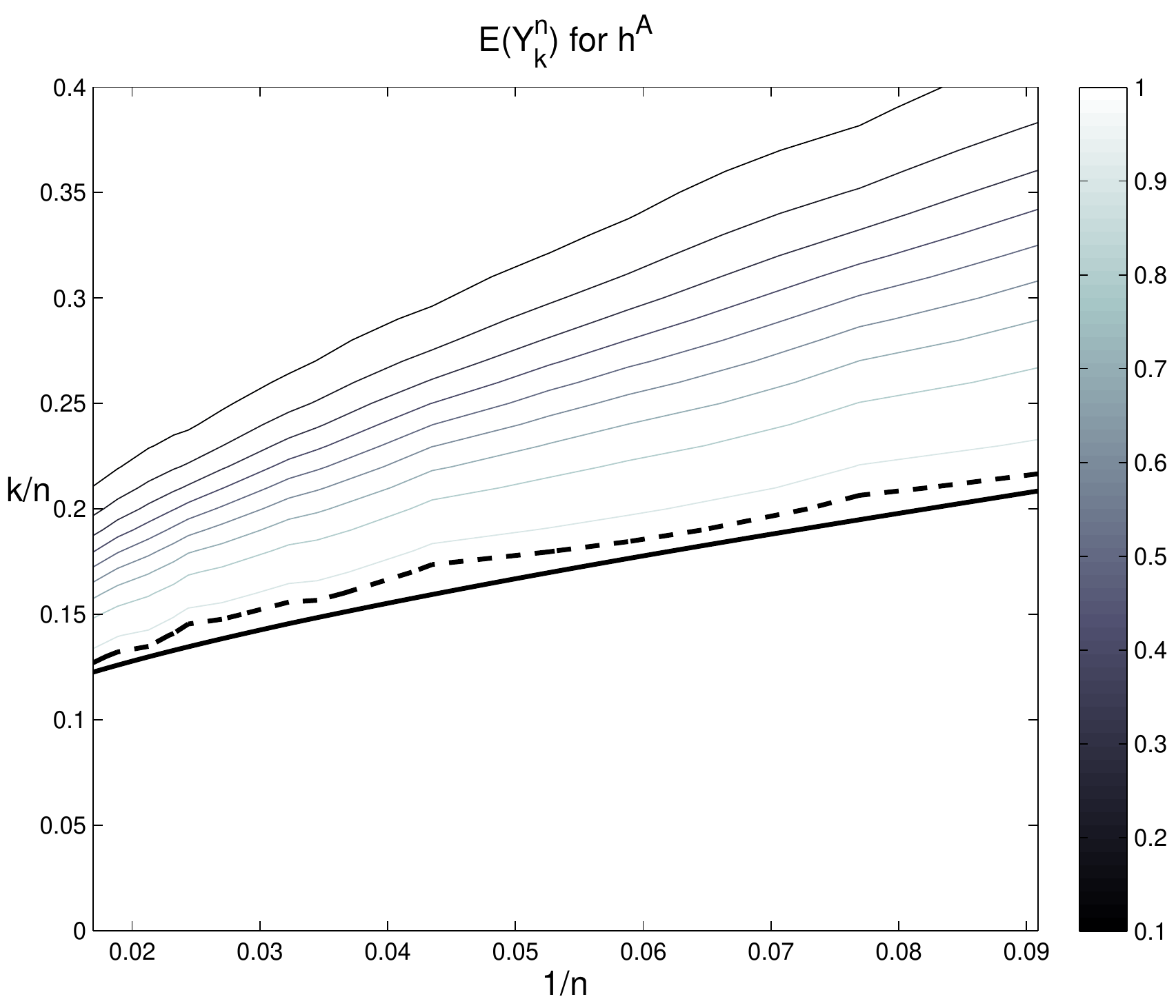}

(a)
\end{center}
\end{minipage}
\qquad
\begin{minipage}{6cm} \begin{center}
\includegraphics[width=2.5in]{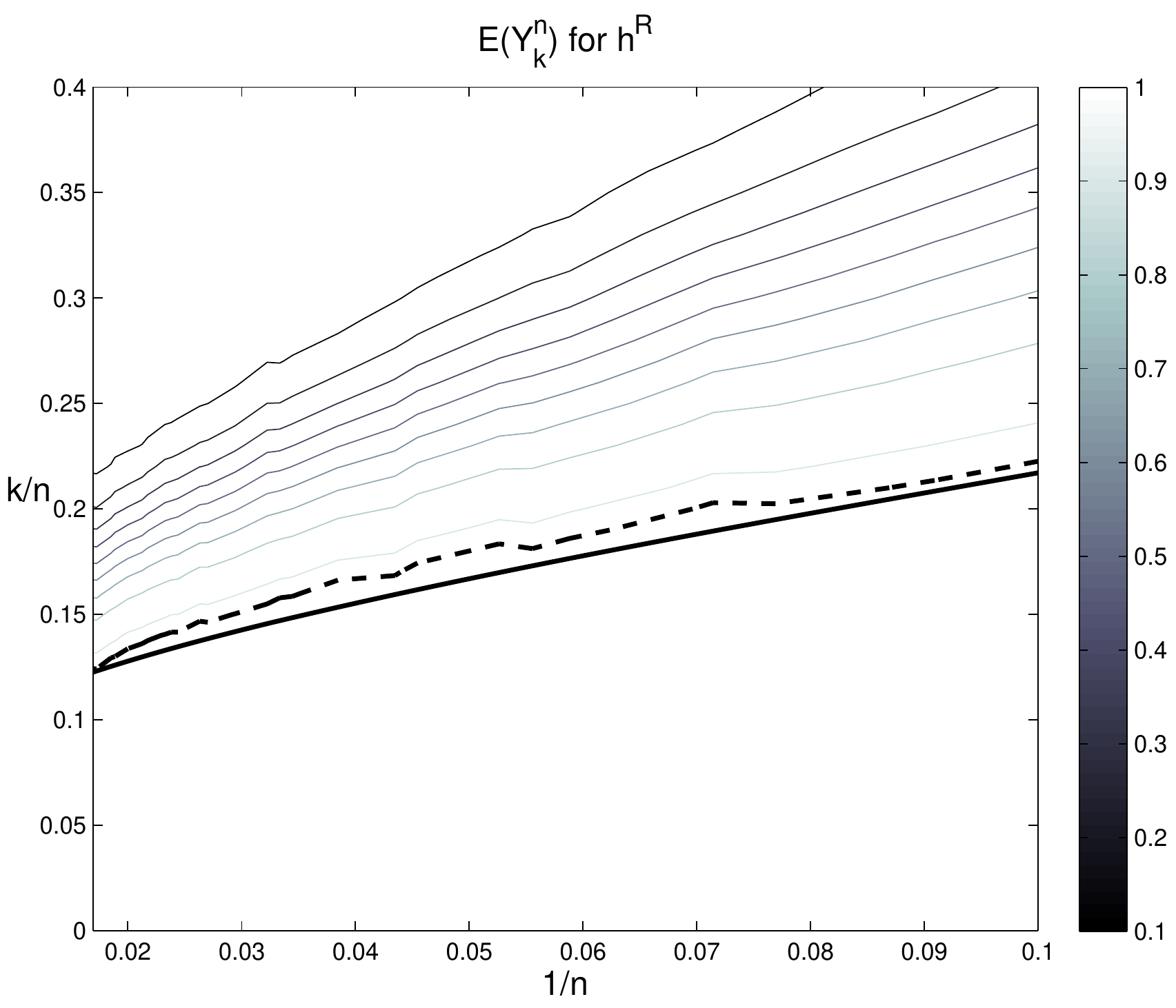}

(b)
\end{center}
\end{minipage}

\end{center}
\caption{Empirical verification of Theorem~\ref{thm:bp_gabor_nlogn}
for $\boldsymbol{h}^A$ (a) and $\boldsymbol{h}^R$ (b) in the noisy setting, with (\ref{eq:l1}) replaced by (\ref{eq:l1noise})
and additive noise of $25$ dB signal to noise ratio.
Contours of the fitted logistic regression model (gray), the $93\%$
success rate contour (dash), and $1/(2\log n)$ (solid).}
\label{fig:contour_noise}
\end{figure}

Unlike the solution of (\ref{eq:l1}) for which the exact solution can be
exactly $k$-sparse, and for which numerical algorithms can compute
approximations of arbitrary precision, the solution of (\ref{eq:l1noise})
from noisy measurements will not recover the solution exactly.  For our
numerical experiments involving noisy measurements,
the vector $\boldsymbol{x}$ associated with $\op$ resulting from the solution
of (\ref{eq:l1noise}) is only considered to have been
successfully recovered if the largest $k$ entries of the recovered $\boldsymbol{x}'$
have the same support set $\Lambda$ as $\boldsymbol{x}$.  Alternative metrics of
successful recovery, such as $\ell^2$ error or Signal to Noise Ratio (SNR), are
less demanding than requiring a match of the
support set; moreover, the support set metric was previously examined in this
setting by Wainwright \cite{wa06} and following this convention allows for a
more direct comparison.  The inequality fit parameter
$\epsilon$ in (\ref{eq:l1noise}) is selected to be at the noise
level $10^{-5/4}$.

As in the noiseless setting, we approximate the
probability distribution of the empirical observations $Y_k^n$ using the
logistic regression model (\ref{eq:logistic}).
Contours of the fitted logistic
regression models for time-frequency shift dictionaries with identifiers
$\boldsymbol{h}^A$ and $\boldsymbol{h}^R$ are shown in Figure~\ref{fig:contour_noise} (a) and (b) respectively.  Overlaying these
contours is the level curve for $93\%$ success rate (dash) and
$1/(2\log n)$ (solid).  Unlike the noiseless case (\ref{eq:l1}), it was shown that
the threshold for overwhelming probability of successful recovery in the
case of Gaussian random $n \times n^2$ matrices with noise using (\ref{eq:l1noise}) is
$1/(4\log n)$, \cite{wa06}; however, we observe in Figure~\ref{fig:contour_noise}
that $1/(2\log n)$ fits the empirical data better in this instance.  As Wainwright
considered the Gaussian setting, this empirical observation for the Gabor system does not
contradict results in \cite{wa06}, but the difference is noteworthy.

In Figure~\ref{fig:logistic2} we illustrate the performance of Basis Pursuit when using multiple test signals as discussed in Section~\ref{sec:severalvectors}, in particular in Theorem~\ref{thm:bp_gabor_nlognMULT}. Figure~\ref{fig:logistic2} was obtained using the same procedure that provided Figure~\ref{fig:logistic}.

\begin{figure}
\begin{center}
\begin{tabular}{c}
\includegraphics[width=2.5in]{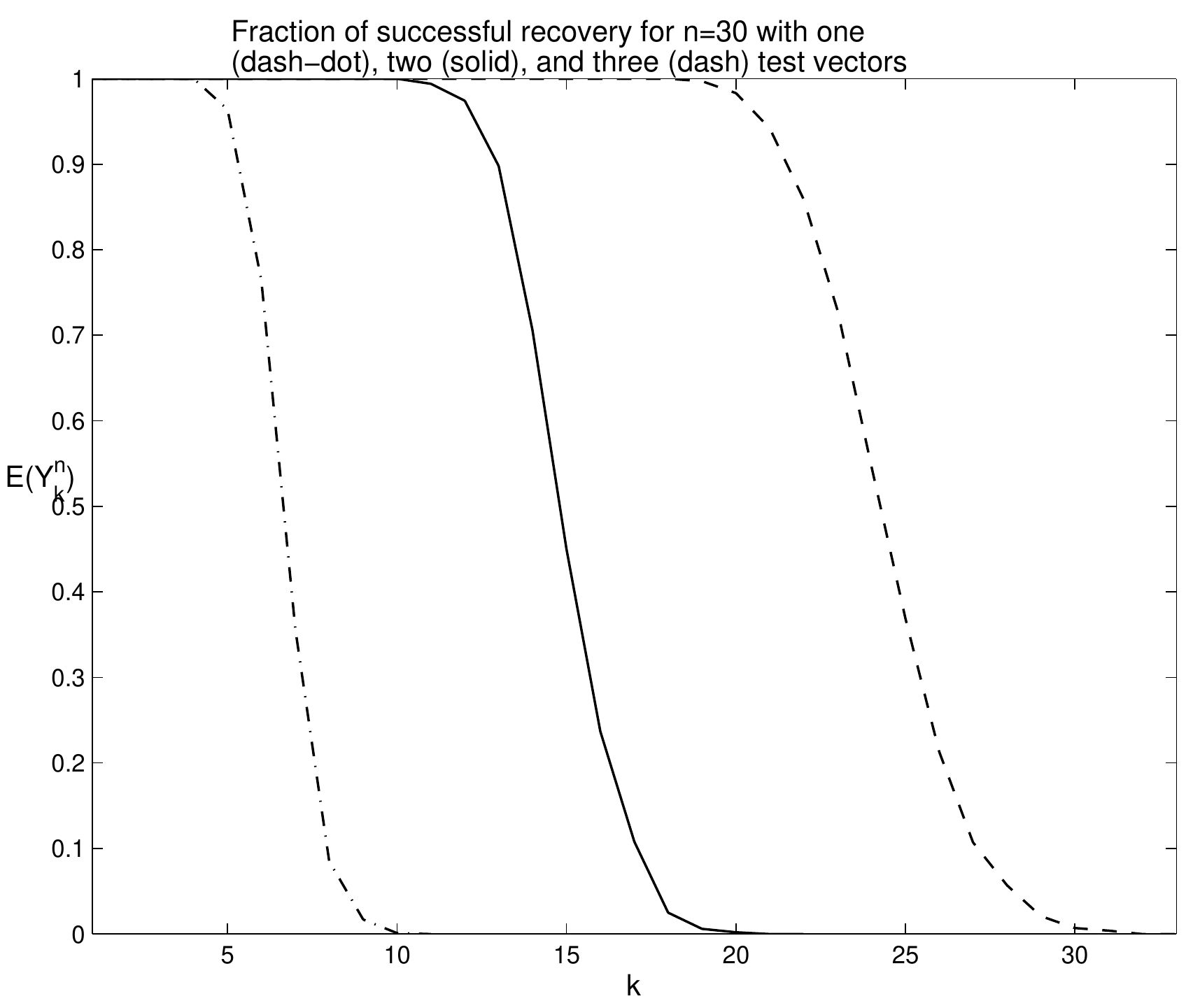}
\end{tabular}
\end{center}
\caption{Empirical verification of Theorem~\ref{thm:bp_gabor_nlognMULT}
 without noise.
For the random windows $\boldsymbol{h}_1^R, \boldsymbol{h}_2^R, \boldsymbol{h}_3^R$ with $n=30$
the fraction of successful recovery  based on $\BG \boldsymbol{h}_1^R$
(dash-dot), $\BG \boldsymbol{h}_1^R$ and $\BG \boldsymbol{h}_2^R$ (solid), and $\BG \boldsymbol{h}_1^R$, $\BG \boldsymbol{h}_2^R$ and $\BG \boldsymbol{h}_3^R$ (dash) test vectors.}\label{fig:logistic2}
\end{figure}

\bibliography{SparseBib}
\bibliographystyle{abbrv}


\end{document}